\documentclass[11pt]{article}

\usepackage[margin=1.12in]{geometry}
\usepackage{amsmath,amssymb,amsthm,mathtools,mathrsfs}
\usepackage{bm}
\usepackage{microtype}
\usepackage{enumitem}
\usepackage{booktabs}
\usepackage{lmodern}
\usepackage[T1]{fontenc}
\DeclareUnicodeCharacter{FF0C}{,}
\usepackage{hyperref}
\usepackage{xcolor}

\newtheorem{theorem}{Theorem}[section]
\newtheorem{proposition}[theorem]{Proposition}
\newtheorem{lemma}[theorem]{Lemma}
\newtheorem{corollary}[theorem]{Corollary}
\newtheorem{remark}[theorem]{Remark}

\theoremstyle{definition}

\newcommand{\R}{\mathbb{R}}
\newcommand{\E}{\mathcal{E}}

\newcommand{\dd}{\,\mathrm{d}}
\newcommand{\sgn}{\operatorname{sgn}}

\title{\bf Propagation Direction of Bistable Traveling Fronts in the
Lotka--Volterra Competition--Diffusion System}
\author{
    Shizhao Ma\textsuperscript{1,2},
    Dongyuan Xiao\textsuperscript{1},
    Maolin Zhou\textsuperscript{3}
    \\[0.8em]
    \small
    \textsuperscript{1}School of Mathematical Sciences,
    Shanghai Jiao Tong University,
    Shanghai 200240, P.R. China
    \\
    \small
    \textsuperscript{2}Institute of Natural Sciences,
    Shanghai Jiao Tong University,
    Shanghai 200240, P.R. China
    \\
    \small
    \textsuperscript{3}Chern Institute of Mathematics, Nankai University, Tianjin 300071, P.R. China
}

\date{}

\begin{document}
\maketitle
\medskip

\begin{abstract}
We study the propagation direction of bistable traveling fronts in the
two-species Lotka--Volterra competition--diffusion system under strong
competition. A complete characterization of the sign of the wave speed
has remained a long-standing unsolved problem. 
We establish the first global
necessary and sufficient criterion for zero wave speed by combining a
Maxwell-type identity with a phase-plane rigidity argument. This criterion
yields a unique zero-speed threshold, identifies its value in the symmetric
case and its limiting values in two extreme regimes, and consequently
determines the propagation direction throughout the strong-competition
parameter region.
\end{abstract}

% \begin{abstract}
% We study the propagation direction of bistable traveling fronts in two-species Lotka--Volterra competition--diffusion system in the strong-competition regime. Although the propagation speed is known to be strictly monotone with respect to the interspecific competition coefficients, its dependence on the diffusion and intrinsic growth parameters is substantially more delicate. In this paper, we first show that every monotone standing front satisfies an exact Maxwell-type identity in which the parameter imbalance is compensated by a nonnegative geometric defect of the projected heteroclinic orbit. Second, for fixed competition coefficients, we prove that at most one diffusion-to-growth ratio can support a monotone standing front.  Combining this with the classical existence, uniqueness, continuity, and parameter monotonicity of bistable fronts yields a unique threshold
% which is continuous and strictly decreasing in the diffusion-to-growth ratio.
% Consequently, the propagation direction can be completely characterized throughout the strong-competition parameter region.
% \end{abstract}

\medskip
\noindent{\em MSC 2020:} 35C07, 35K57, 34C37, 92D25.\\
{\em Keywords:} Lotka--Volterra competition-diffusion system; bistable traveling wave;
% propagation direction; 
%standing wave; 
phase plane analysis; heteroclinic orbit.
%; Maxwell identity.

\section{Introduction and main results}

We study the one-dimensional Lotka--Volterra competition--diffusion system
\begin{equation}\label{PDE}
\begin{cases}
u_t=u_{xx}+u(1-u-k_1v),\\[1mm]
v_t=dv_{xx}+rv(1-k_2u-v),
\end{cases}
\qquad t>0,\quad x\in\mathbb R.
\end{equation}
Here \(u=u(t,x)\) and \(v=v(t,x)\) denote the population densities of the two competing species. We normalize both the diffusion coefficient and the intrinsic growth rate of the \(u\)-species to one. Thus, \(d>0\) and \(r>0\) are, respectively, the diffusion coefficient and intrinsic growth rate of the \(v\)-species relative to those of the \(u\)-species. The coefficients \(k_1>0\) and \(k_2>0\) measure the competitive effects exerted by the \(v\)-species on the \(u\)-species and by the \(u\)-species on the \(v\)-species, respectively.

Before turning to the strong competition system, it is
useful to recall its scalar limits.  If the \(v\)-species is absent,
then \(v\equiv0\) and the \(u\)-component satisfies
\[
u_t=u_{xx}+u(1-u).
\]
Similarly, if the \(u\)-species is absent, then \(u\equiv0\) and the
\(v\)-component satisfies
\[
v_t=dv_{xx}+rv(1-v).
\]
Both equations belong to the classical Fisher--KPP family introduced
independently by Fisher \cite{Fisher1937} and by Kolmogorov,
Petrovskii and Piskunov \cite{KPP1937}. More generally, the scalar
equation
\[
w_t=Dw_{xx}+Rw(1-w),
\qquad D>0,\quad R>0,
\]
admits an increasing traveling profile
\[
w(t,x)=W(x+ct),
\qquad
W(-\infty)=0,\quad W(+\infty)=1,
\]
if and only if
\[
c\geq 2\sqrt{DR}.
\]
Hence the uncoupled \(u\)- and \(v\)-equations have the Fisher--KPP minimal wave speeds \(c_u=2\) and \(c_v=2\sqrt{dr}\), respectively.

Throughout the paper, we assume
\begin{equation}\label{SC}
k_1>1\ \ \text{and}\ \ k_2>1.
\end{equation}
Under \eqref{SC}, the two semi-trivial equilibria
\((1,0)\) and \((0,1)\) are stable for the associated kinetic system, and the traveling wave is of bistable type. We seek monotone bistable fronts of the form
\[
(u,v)(t,x)=(U,V)(\xi),\qquad \xi=x+ct,
\]
where \(c\in\mathbb R\) is the propagation speed and \(U,V\) are the wave profiles. They satisfy
\begin{equation}\label{TW}
\begin{cases}
U''-cU'+U(1-U-k_1V)=0,\\
dV''-cV'+rV(1-k_2U-V)=0,
\end{cases}
\qquad \xi\in\mathbb R,
\end{equation}
with
\begin{equation}\label{BC}
(U,V)(-\infty)=(0,1),\qquad
(U,V)(+\infty)=(1,0),
\end{equation}
and
\begin{equation}\label{mono}
U'>0>V'.
\end{equation}
We denote by \(c=c(d,r,k_1,k_2)\) the unique propagation speed. 
Here and below, primes denote differentiation with respect to the traveling wave variable \(\xi\). 
In view of the end states in \eqref{BC}, \(c>0\) means that the
\(u\)-species invades the region occupied by the \(v\)-species, whereas
\(c<0\) means that the \(v\)-species invades the region occupied by the
\(u\)-species.

The existence and uniqueness, up to translation, of the monotone bistable front and the uniqueness of its propagation speed are classical; see Gardner \cite{Gardner1982}, Kan-on \cite{Kanon1995}, and the recent work of Nakamura and Ogiwara \cite{NakamuraOgiwara2026}. Moreover, the speed is strictly decreasing in \(k_1\) and strictly increasing in \(k_2\) \cite{Kanon1995,NakamuraOgiwara2026}. Consequently, for each fixed \(d,r,k_2\), there exists a unique threshold
\[
k_*=k_*(d,r,k_2)>1
\]
such that
\begin{equation}\label{thresholdknown}
c>0\iff k_1<k_*,\qquad
c=0\iff k_1=k_*,\qquad
c<0\iff k_1>k_*.
\end{equation}

The sign of the bistable wave speed has been investigated from several perspectives. Standing wave
criteria and parameter-dependent sign conditions were obtained by
Kan-on \cite{Kanon1996} and Guo and Lin \cite{GuoLin2013}.
Chang, Chen, and Wang \cite{ChangChenWang2023} introduced a minimax
characterization of the zero-speed condition and derived further
explicit sign criteria. More recently, Xiao \cite{Xiao2025} obtained
sufficient conditions for the propagation direction by analyzing an
associated degenerate competition system. These results provide important partial descriptions of the parameter space, but they do not establish global uniqueness and monotonicity of the zero-speed curve \(c(d,r,k_1,k_2)=0\) as a function of the diffusion-to-growth ratio \(\rho=d/r\).

% The more delicate issue is the dependence of the propagation direction on the diffusion and intrinsic growth parameters \(d\) and \(r\). Recent work explicitly records the corresponding monotonicity problem as open in general \cite{NakamuraOgiwara2026}. 
% Numerical computations in \cite{AlzahraniDavidsonDodds2012}, summarized in Remark~5.2 of \cite{NakamuraOgiwara2026}, suggest that, for fixed \(r\) and \(k_2\), the zero-speed curve in the \((k_1,d)\)-plane is monotone, passes through \((k_2,r)\), and has limiting points \((k_2^2,0)\) and \((\sqrt{k_2},\infty)\). 
% Nakamura and Ogiwara \cite{NakamuraOgiwara2026} further noted that a complete proof of this picture, including the exact universal thresholds, had not yet been established.

The more delicate issue is the dependence of the propagation
direction on the diffusion and intrinsic growth parameters \(d\) and
\(r\). Recent work explicitly records the corresponding monotonicity
problem as open in general \cite{NakamuraOgiwara2026}. Numerical
computations in \cite{AlzahraniDavidsonDodds2012}, summarized in
Remark~5.2 of \cite{NakamuraOgiwara2026}, suggest that, for fixed
\(r\) and \(k_2\), the zero-speed curve $c(d,k_1)=0$ in the \((k_1,d)\)-plane is
monotone and satisfies
\[
k_1\longrightarrow k_2^2
\qquad\text{as }d\downarrow0,
\]
and
\[
k_1\longrightarrow\sqrt{k_2}
\qquad\text{as }d\to\infty.
\]
Nakamura and Ogiwara \cite{NakamuraOgiwara2026} further noted that a
complete proof of this picture, including the exact universal
thresholds, had not yet been established.

The purpose of this paper is to complete this picture. The key observation is that the standing wave problem has substantially more structure than the general traveling wave problem. Setting \(c=0\) in \eqref{TW} and dividing the second equation by \(r\), we obtain
\begin{equation}\label{SW}
\begin{cases}
U''+U(1-U-k_1V)=0,\\
\rho V''+V(1-k_2U-V)=0,
\end{cases}
\qquad \xi\in\mathbb R,
\end{equation}
together with
\[
(U,V)(-\infty)=(0,1),\qquad
(U,V)(+\infty)=(1,0),
\qquad
U'>0>V'.
\]
Thus, at zero speed, the diffusion coefficient \(d\) and intrinsic growth rate \(r\) enter the standing wave system only through the ratio \(\rho=d/r\).

Our first result is a Maxwell-type identity.

\begin{theorem}[Zero-speed Maxwell identity]\label{thm:Maxwell}
Let $k_1,k_2>1$ and $\rho>0$.  If \eqref{SW} admits a monotone standing front, then
\begin{equation}\label{Maxwell}
 3k_1k_2(\rho-1)\E+(k_1-k_2)\bigl[\rho(k_1-1)+(k_2-1)\bigr]=0,
\end{equation}
where
\begin{equation}\label{Edef}
 \E:=\int_{\R}(U+V-1)^2U'(\xi)\,\dd \xi>0.
\end{equation}
Consequently,
\begin{equation}\label{oppositesign}
 (\rho-1)(k_1-k_2)<0
\end{equation}
unless $\rho=1$ and $k_1=k_2$.
\end{theorem}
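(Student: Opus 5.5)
The plan is to derive \eqref{Maxwell} purely from integral identities obtained by multiplying \eqref{SW} by suitable first derivatives and integrating over $\R$. Since the front is monotone and converges to $(0,1)$ and $(1,0)$, standard ODE arguments give $U',V',U'',V''\to0$ at $\pm\infty$ exponentially fast, so all boundary terms are computable.

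\textbf{Three multiplier identities.}
\begin{enumerate}
\item Multiply the first equation by $U'$. Since $\int U''U'=0$ and $\int U(1-U)U'=\int_0^1 s(1-s)\,ds=\tfrac16$, this gives $\int UVU'=\tfrac{1}{6k_1}$.
\item Multiply the second equation by $V'$. Since $V$ runs from $1$ to $0$, this gives $\int UVV'=-\tfrac{1}{6k_2}$.
\item This is the only identity in which $\rho$ survives. Multiply the first equation by $\rho V'$ and the second by $U'$, and add. The diffusion terms combine into $\rho(U'V')'$, which integrates to zero. Hence
\[
\rho\int U(1-U-k_1V)V'\,\dd\xi+\int V(1-k_2U-V)U'\,\dd\xi=0 .
\]
\end{enumerate}

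\textbf{Reducing to $\E$.} All integrals above are of the form $\int U^aV^bU'$ or $\int U^aV^bV'$ with $a+b\le 2$. I would reduce them using exact derivatives, $\int (U^aV^b)'=U^aV^b\big|_{-\infty}^{+\infty}$, which vanishes whenever $a,b\ge1$. This leaves a small number of genuinely unknown moments, for instance $\int VU'$, $\int V^2U'$ and $\int UVU'$. The last is fixed by step 1, and $\int UVV'$ is fixed by step 2.

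Expanding
\[
\E=\int (U^2+V^2+1+2UV-2U-2V)\,U'
\]
expresses $\E$ in terms of the same moments: $\int U^2U'=\tfrac13$, $\int UU'=\tfrac12$, $\int U'=1$. I would then solve for the remaining unknowns, plug everything into identity 3, and multiply through by $6k_1k_2$, aiming to land exactly on \eqref{Maxwell}.

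I expect this bookkeeping to be the main obstacle. One must check that, after steps 1 and 2 are used, identity 3 depends on the unknown moments only through the single combination $\E$. The coefficient $3k_1k_2(\rho-1)$ suggests that the terms with $\rho$ and without $\rho$ produce $\E$ with opposite weights, which is a useful consistency check. If a leftover moment remains, I would look for a further exact derivative, such as $(UV^2)'$ or $(U^2V)'$, to remove it.

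\textbf{Positivity of $\E$.} The integrand of $\E$ is nonnegative since $U'>0$, so it suffices to rule out $U+V\equiv1$. If $U+V\equiv1$, the first equation becomes $U''=(k_1-1)U(1-U)$, which is positive because $k_1>1$ and $0<U<1$. Then $U'$ is strictly increasing, which contradicts $U'>0$ together with $U'(+\infty)=0$. Hence $\E>0$.

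\textbf{Consequence \eqref{oppositesign}.} The bracket $\rho(k_1-1)+(k_2-1)$ is positive. If $\rho=1$, \eqref{Maxwell} forces $k_1=k_2$. If $\rho\neq1$, multiply \eqref{Maxwell} by $(\rho-1)$ to get
\[
(\rho-1)(k_1-k_2)=-\frac{3k_1k_2(\rho-1)^2\E}{\rho(k_1-1)+k_2-1}<0 .
\]
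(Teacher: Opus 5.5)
Your proposal is correct and follows the paper's proof essentially step for step. You use the same three multiplier identities, and the bookkeeping closes as you hope: after steps 1--2 (linked through $(UV^2)'$, which gives $\int U'V^2=\tfrac{1}{3k_2}$), the only remaining unknown is $M=\int U'V$, which enters identity 3 as $(1-\rho)M$ and is eliminated via $\E=\tfrac13+\tfrac{1}{3k_1}+\tfrac{1}{3k_2}-2M$. The only difference is in ruling out $U+V\equiv1$, and it is cosmetic: you use convexity of $U$ together with $U'(+\infty)=0$, whereas the paper multiplies $U''=(k_1-1)U(1-U)$ by $U'$ and integrates to obtain $0=-(k_1-1)/6$; both arguments are valid.
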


Our second result establishes rigidity with respect to the diffusion-to-growth ratio.

\begin{theorem}\label{thm:rigidity}
Fix $k_1,k_2>1$.  There exists at most one $\rho>0$ for which \eqref{SW} admits a monotone standing front.
\end{theorem}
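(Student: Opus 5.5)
The plan is to argue by contradiction. Suppose $0<\rho_1<\rho_2$ both admit monotone standing fronts $(U_i,V_i)$, $i=1,2$. I will compare the two heteroclinic orbits after projecting them to the $(U,V)$-plane.

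\textbf{Reduction to graphs.} By \eqref{mono}, each projected orbit is the graph $V=\varphi_i(U)$ of a strictly decreasing function on $(0,1)$, with $\varphi_i(0)=1$ and $\varphi_i(1)=0$. Along the orbit set $P_i(U)=U'^2$ and $Q_i(V)=V'^2$. Multiplying the equations of \eqref{SW} by $U'$ and by $V'$ respectively gives
\[
\frac{dP_i}{dU}=-2U\bigl(1-U-k_1\varphi_i(U)\bigr),\qquad
\rho_i\,\frac{dQ_i}{dV}=-2V\bigl(1-k_2\varphi_i^{-1}(V)-V\bigr).
\]
These are coupled through the compatibility relation $Q_i=\varphi_i'(U)^2P_i$. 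Integrating from the endpoints, where $P_i$ and $Q_i$ vanish, expresses $P_i$ and $Q_i$ as integrals of the kinetic terms along the curve. The factor $1/\rho_i$ appears only in $Q_i$. In particular, the slope $\varphi_i'=-\sqrt{Q_i/P_i}$ is determined by the curve itself together with $\rho_i$.

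\textbf{Ordering near the endpoints.} Linearising at the saddle $(0,1)$, the unstable direction of the orbit has slope determined by $\sqrt{k_1-1}$ in the $U$-direction versus $\rho^{-1/2}$ in the $V$-direction. Doing the same at $(1,0)$, with $\sqrt{(k_2-1)/\rho}$ against $1$, gives the stable direction. From these computations I expect to show that for the larger $\rho$ the curve $\varphi_2$ leaves $(0,1)$ on one side of $\varphi_1$ and enters $(1,0)$ on the opposite side. If an eigenvalue coincidence makes the first-order slopes equal, I will compare second-order terms instead. This step uses only explicit eigenvector formulas. It forces the two graphs to intersect at some interior point $U_0\in(0,1)$.

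\textbf{Rigidity at an intersection (main step).} I take a crossing point $U_0$, choosing the first or last crossing so that one curve lies above the other on a whole side. On that side, $\varphi_1\le\varphi_2$, say. This makes the kinetic term in $dP/dU$ pointwise ordered, hence $P_1(U_0)$ and $P_2(U_0)$ are ordered. The kinetic term in $dQ/dV$ is likewise ordered, and combined with the $1/\rho$ factor, the strict inequality $\rho_1<\rho_2$ orders $Q_1$ and $Q_2$. The aim is to show that these orderings force $|\varphi_1'(U_0)|$ and $|\varphi_2'(U_0)|$ into the order incompatible with the direction in which the curves cross at $U_0$.

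This is the step I expect to be hardest. The $P$-comparison and the $Q$-comparison must be made on the appropriate side of $U_0$ so that both inequalities point the same way. Since $P$ is naturally integrated from $U=0$ and $Q$ from $V=0$, i.e.\ from opposite ends, the domination region must be chosen accordingly. If a single crossing does not yield the contradiction, I will fall back on counting crossings: show there is exactly one crossing, then combine the two one-sided comparisons on $(0,U_0)$ and $(U_0,1)$. As a further check, I will use \eqref{oppositesign} of Theorem~\ref{thm:Maxwell}, which already fixes on which side of $1$ any admissible $\rho$ lies, to restrict the sign cases.
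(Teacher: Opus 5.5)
There is a genuine gap in your main step, and it cannot be fixed by a better choice of side. Write $D=\varphi_2-\varphi_1$. On the $V$-side, the quantity determined by the curve is $\rho_iV_i'^2$, not $V_i'^2$. So at a crossing $U_0$,
\[
\frac{\varphi_2'(U_0)^2}{\varphi_1'(U_0)^2}
=\frac{\rho_1}{\rho_2}\cdot\frac{\rho_2V_2'^2}{\rho_1V_1'^2}\cdot\frac{P_1(U_0)}{P_2(U_0)}.
\]
By your own endpoint ordering, $D<0$ near $0$ and $D>0$ near $1$. Hence the first and the last sign-changing crossings are both \emph{upward} crossings of $D$. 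A contradiction there requires the ratio above to exceed $1$, and the factor $\rho_1/\rho_2<1$ works against this at every upward crossing.

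At the first crossing, your one-sided comparisons give
\[
P_2-P_1=2k_1\int_0^{U_0}sD\dd s<0,\qquad
\rho_2V_2'^2-\rho_1V_1'^2=-k_2\int_0^{U_0}(\varphi_1+\varphi_2)D\dd s>0.
\]
The same signs hold at the last crossing (integrating from $U=1$, resp.\ $V=0$). So two factors exceed $1$, one is below $1$, and nothing follows. In particular, your claim that the $1/\rho$ factor orders $Q_1$ and $Q_2$ is false there: $Q_i=(\rho_iV_i'^2)/\rho_i$, and both numerator and denominator are larger for $i=2$. Changing the end from which $Q$ is integrated changes nothing, since both integrations give the same value.

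Your fallback of proving that there is exactly one crossing points the wrong way. Vanishing of $P_i$ and $Q_i$ at both ends is equivalent to the moment identities $\int_0^1u\varphi_i\dd u=1/(6k_1)$ and $\int_0^1\varphi_i^2\dd u=1/(3k_2)$. Hence $\int_0^1uD\dd u=\int_0^1(\varphi_1+\varphi_2)D\dd u=0$. Set $q(u)=(\varphi_1+\varphi_2)(u)/u$, which is strictly decreasing. A single sign change at $U_0$ would make $\int_0^1u\,[q(u)-q(U_0)]\,D(u)\dd u$ strictly negative, although it equals $0$. So $D$ has at least three sign changes, and the contradiction must be sought at an interior \emph{downward} crossing.

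The missing idea is the paper's lens-and-summation argument. Put $A(u)=\int_0^u sD(s)\dd s$ and $B(u)=\int_0^u(\varphi_1+\varphi_2)D\dd s$; both vanish at $u=0$ and $u=1$. Split $(0,1)$ at the downward crossings into lenses $(e_j,e_{j+1})$, each containing one upward crossing $c_j$. On each lens, $\int_{e_j}^{e_{j+1}}u\,[q(u)-q(c_j)]\,D(u)\dd u<0$. Summing these inequalities by parts produces a downward crossing $u_*$ with $A(u_*)>0$ and $B(u_*)>0$. At $u_*$, the three facts $P_2>P_1$, $\rho_2V_2'^2<\rho_1V_1'^2$ and $\rho_1<\rho_2$ all push the slope ratio below $1$. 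This gives $D'(u_*)>0$, contradicting the downward crossing. Without some such way to locate a crossing where all three factors point the same way, your argument cannot be completed.
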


The proof of Theorem \ref{thm:rigidity} does not use any monotonicity of the wave speed with respect to \(d\) or \(r\). We compare two hypothetical standing phase curves \(V=H_i(U)\) corresponding to \(\rho_1<\rho_2\). Two exact moment identities, together with the opposite endpoint orderings of the curves, lead to a lens-shaped graph formed by the two phase curves. A discrete summation argument then indicates a downward crossing at which the associated kinetic quantities force the opposite slope ordering, yielding a contradiction. The argument is developed in Section \ref{sec:lens}.

We can now state the main threshold theorem. Its proof also uses the standard continuous dependence of the unique bistable front and its speed on the coefficients (see \cite{Kanon1995,Volpert1994}).

% We now state the global consequence. The external traveling wave
% results used below are summarized more precisely in
% Proposition~\ref{prop:standard_front_dependence}. For each fixed
% \(d>0\), the \(C^1\)-dependence of the monotone wave profiles and
% their speed on the kinetic parameters, together with uniqueness up
% to translation and the strict parameter monotonicities, follows from
% Kan-on \cite[Theorem~2.1]{Kanon1995}. The general existence and
% uniqueness theory for monotone bistable systems is given in
% \cite[Chapter~3, Theorem~1.1]{VolpertEtAl1994}, while the uniform
% profile and speed estimates required in the compactness argument are
% provided by
% \cite[Chapter~3, Proposition~1.2 and Theorem~2.1]{VolpertEtAl1994}.
% Only continuity in \(\rho\) is needed here, not monotonicity of the
% speed with respect to \(d\) or \(r\).

\begin{theorem}\label{thm:main}
Fix $k_2>1$.  There exists a unique continuous function
\[
 K_{k_2}:(0,\infty)\longrightarrow(1,\infty)
\]
such that, for every $d,r>0$ and $k_1>1$ with $\rho=d/r$,
\begin{equation}\label{Kzero}
 c(d,r,k_1,k_2)=0\quad\Longleftrightarrow\quad k_1=K_{k_2}(\rho).
\end{equation}
Moreover:
\begin{enumerate}[label=\rm(\roman*)]
\item $K_{k_2}$ is strictly decreasing on $(0,\infty)$;
\item
\begin{equation}\label{threepoints}
 K_{k_2}(1)=k_2,\qquad
 \lim_{\rho\downarrow0}K_{k_2}(\rho)=k_2^2,
 \qquad
 \lim_{\rho\to\infty}K_{k_2}(\rho)=\sqrt k_2;
\end{equation}
\item the speed is classified by
\begin{equation}\label{signK}
 \begin{aligned}
 c(d,r,k_1,k_2)>0&\iff k_1<K_{k_2}(\rho),\\
 c(d,r,k_1,k_2)=0&\iff k_1=K_{k_2}(\rho),\\
 c(d,r,k_1,k_2)<0&\iff k_1>K_{k_2}(\rho).
 \end{aligned}
\end{equation}
\end{enumerate}
In particular, $K_{k_2}$ maps $(0,\infty)$ bijectively onto $(\sqrt k_2,k_2^2)$.
\end{theorem}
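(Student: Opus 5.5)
Define $K_{k_2}(\rho):=k_*(d,r,k_2)$ from \eqref{thresholdknown}. Since the standing wave system \eqref{SW} depends on $(d,r)$ only through $\rho=d/r$, and the monotone front is unique, $c=0$ holds precisely when \eqref{SW} has a monotone front. Hence $k_*(d,r,k_2)$ depends only on $\rho$, and $K_{k_2}$ is well defined. Then \eqref{Kzero} and \eqref{signK} follow at once from \eqref{thresholdknown}, because $c$ is strictly decreasing in $k_1$. Uniqueness of $K_{k_2}$ is automatic: any function satisfying \eqref{Kzero} must coincide with this threshold.

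For continuity, I would use the continuous dependence of $c$ on the parameters \cite{Kanon1995,Volpert1994}. Fix $\rho_0$ and take $\varepsilon>0$. At $k_1=K(\rho_0)\pm\varepsilon$ the speed at $\rho_0$ is strictly negative or strictly positive, respectively. By continuity these signs persist for $\rho$ near $\rho_0$, and strict monotonicity in $k_1$ then traps $K(\rho)$ in $(K(\rho_0)-\varepsilon,K(\rho_0)+\varepsilon)$.

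For strict monotonicity (i), I would argue as follows. For fixed $k_1$, Theorem~\ref{thm:rigidity} says at most one $\rho$ admits a standing front, so $K_{k_2}$ is injective. A continuous injective function on an interval is strictly monotone. To fix the direction, evaluate at $\rho=1$. There the symmetric case $k_1=k_2$ yields a standing front: with $d=r=1$ and $k_1=k_2$, the reflection $(U,V)(\xi)\mapsto (V,U)(-\xi)$ maps fronts to fronts and sends $c$ to $-c$, so uniqueness forces $c=0$. Hence $K(1)=k_2$. Next, Theorem~\ref{thm:Maxwell}, via \eqref{oppositesign}, gives $(\rho-1)(K(\rho)-k_2)<0$. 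So $K>k_2$ for $\rho<1$ and $K<k_2$ for $\rho>1$, which forces $K$ to be decreasing.

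The limits in (ii) are the main obstacle. Since $K$ is monotone, the limits $L_0=\lim_{\rho\downarrow0}K$ and $L_\infty=\lim_{\rho\to\infty}K$ exist. My first attempt would be to pass to the limit in the Maxwell identity \eqref{Maxwell}. Solving it gives
\[
3k_1k_2(\rho-1)\E=-(k_1-k_2)\bigl[\rho(k_1-1)+k_2-1\bigr].
\]
As $\rho\to0$ this reduces to $3k_1k_2\E_0=(k_1-k_2)(k_2-1)$, and as $\rho\to\infty$ (after dividing by $\rho$) to $3k_1k_2\E_\infty=-(k_1-k_2)(k_1-1)$. Everything then hinges on identifying the limiting defects $\E_0,\E_\infty$. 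In the singular limit $\rho\to0$, the $V$-equation becomes algebraic. Since the $U$-orbit is a heteroclinic connection of a scalar Hamiltonian equation with a discontinuous nonlinearity, I would expect a Maxwell equal-area condition to emerge. One branch is $V=1-k_2U$ on $[0,1/k_2]$, the other is $V=0$, with the jump placed to balance potentials. The integral $\int_0^1 U(1-U-k_1V)\,dU=0$ (with $V$ switching at $U=a$) is
\[
\int_0^a U(1-k_1)+U^2(k_1k_2-1)\,dU+\int_a^1 U(1-U)\,dU=0.
\]
I expect this to yield $k_1=k_2^2$ when $a=1/k_2$, and I would verify that. The limit $\rho\to\infty$ is symmetric after rescaling $\xi$ by $\sqrt\rho$, exchanging the roles of $U$ and $V$: now $U$ becomes slaved to $U=1-k_1V$, and the corresponding computation should give $k_2=k_1^2$, i.e.\ $k_1=\sqrt{k_2}$. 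Rigorously justifying these singular limits needs compactness of the fronts, with uniform estimates from \cite{Volpert1994}, and a check that the slaved branch is the correct one. Finally, continuity, strict monotonicity and the two limits give that $K_{k_2}$ is a bijection onto $(\sqrt{k_2},k_2^2)$.
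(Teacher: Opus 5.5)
\textbf{Verdict: genuine gap.} Part (ii), the two endpoint limits, is not proved.

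Most of your argument is sound and matches the paper:
\begin{itemize}
\item the reduction of $k_*(d,r,k_2)$ to a function of $\rho$;
\item \eqref{Kzero} and \eqref{signK} from strict $k_1$-monotonicity;
\item continuity by the $\pm\varepsilon$ trapping argument;
\item injectivity from Theorem~\ref{thm:rigidity};
\item the direction of monotonicity from \eqref{oppositesign}.
\end{itemize}
Your reflection $(U,V)(\xi)\mapsto(V,U)(-\xi)$ at $d=r=1$, $k_1=k_2$, combined with uniqueness of the speed, is a valid alternative route to $K_{k_2}(1)=k_2$. The paper instead sets $\rho=1$ in \eqref{Maxwell} to get $(K_{k_2}(1)-k_2)(K_{k_2}(1)+k_2-2)=0$.

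\emph{Why your argument for (ii) does not go through.} What you give is a formal singular-limit computation. As a heuristic it is right: at $a=1/k_2$ your integral equals $\frac16-\frac{k_1}{6k_2^2}$, which vanishes iff $k_1=k_2^2$. The hard step is turning it into a theorem, and you do not supply it. You would need convergence of the standing fronts as $\rho\downarrow0$ to the reduced problem with $V=\max\{1-k_2U,0\}$, with enough control to conclude $K_{k_2}(\rho)\to k_2^2$. That is a genuine singular-perturbation problem. The estimates of \cite{Volpert1994} you invoke are for nondegenerate diffusion and are not uniform as $\rho\downarrow0$, where the $V$-equation loses its second-order term.

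Your picture of the reduced problem is also off in two ways:
\begin{itemize}
\item The reduced nonlinearity $U\bigl(1-U-k_1\max\{1-k_2U,0\}\bigr)$ is continuous, not discontinuous.
\item The switching point $a$ is not a free jump location fixed by equal area. That would give one relation for two unknowns, $a$ and $k_1$, and would not pin $k_1$. It is forced to be $a=1/k_2$: an inner $V$-layer at frozen $U=a<1/k_2$ would have to be a zero-speed Fisher--KPP heteroclinic from $b=1-k_2a$ to $0$. No such heteroclinic exists, since $\int_0^b V(b-V)\dd V=b^3/6>0$.
\end{itemize}

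\emph{How the paper closes this, and what you need.} The paper cites the rigorous near-degenerate theorem of Alzahrani--Davidson--Dodds \cite{AlzahraniDavidsonDodds2010}. This gives the fixed-pair sign rule \eqref{smalldsign}: for $k_1\neq k_2^2$ and all sufficiently small $\rho$, the sign of $c$ is that of $k_2^2-k_1$. Combined with monotonicity of $K_{k_2}$ and \eqref{signK}, this squeezes $\lim_{\rho\downarrow0}K_{k_2}(\rho)$ to $k_2^2$, by letting $k_1\downarrow k_2^2$ and $k_1\uparrow k_2^2$.

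Your large-$\rho$ idea (rescale by $\sqrt\rho$ and swap species) is the standing-wave form of the exchange identity \eqref{exchange}. It gives $K_{k_2}(\rho)=k_1\iff K_{k_1}(1/\rho)=k_2$. Once the small-ratio rule is available for the swapped pair $(k_2,k_1)$, you can finish as the paper does: pick $k_1$ strictly between $\sqrt{k_2}$ and a hypothetical $L_\infty\neq\sqrt{k_2}$ and derive a sign contradiction.

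So the entire missing piece is a proof, or a citation, of the small-$\rho$ sign rule. Without it, both limits in (ii) and the range statement $(\sqrt{k_2},k_2^2)$ remain unproved.
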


The limit \(k_2^2\) at \(\rho\downarrow0\) follows from the rigorous near-degenerate threshold obtained by Alzahrani--Davidson--Dodds \cite{AlzahraniDavidsonDodds2010}; see also Section~5 of \cite{NakamuraOgiwara2026}. The opposite limit \(\sqrt{k_2}\) follows from the exact species-exchange identity.
\begin{equation}\label{exchange-intro}
 c(d,r,k_1,k_2)=-\sqrt{dr}\;c(1/d,1/r,k_2,k_1),
\end{equation}
combined with the $d/r\downarrow0$ limit.

Two useful consequences are immediate.

\begin{corollary}
\label{cor:sharp}
Fix \(k_2>1\).

\begin{enumerate}
\item[(i)]
If \(k_1\geq k_2^2\), then
\[
c(d,r,k_1,k_2)<0
\]
for every \(d,r>0\).

\item[(ii)]
If \(1<k_1\leq \sqrt{k_2}\), then
\[
c(d,r,k_1,k_2)>0
\]
for every \(d,r>0\).

\item[(iii)]
If
\[
\sqrt{k_2}<k_1<k_2^2,
\]
then there exists a unique
\[
\rho_*=\rho_*(k_1,k_2)>0
\]
such that
\[
c(d,r,k_1,k_2)=0
\iff
\frac{d}{r}=\rho_*.
\]
Moreover,
\[
\frac{d}{r}<\rho_*
\implies
c(d,r,k_1,k_2)>0,
\qquad
\frac{d}{r}>\rho_*
\implies
c(d,r,k_1,k_2)<0.
\]
\end{enumerate}
\end{corollary}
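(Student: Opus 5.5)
The corollary will follow directly from Theorem~\ref{thm:main}, so no new analysis is needed. Fix $k_2>1$ and let $K=K_{k_2}$. By Theorem~\ref{thm:main}, $K$ is continuous and strictly decreasing on $(0,\infty)$, with $\lim_{\rho\downarrow0}K(\rho)=k_2^2$ and $\lim_{\rho\to\infty}K(\rho)=\sqrt{k_2}$.

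The first step is to note that strict monotonicity makes these limits strict bounds:
\[
\sqrt{k_2}<K(\rho)<k_2^2\qquad\text{for every }\rho\in(0,\infty).
\]
Indeed, if $K(\rho_0)\ge k_2^2$ for some $\rho_0$, then $K(\rho)>K(\rho_0)\ge k_2^2$ for all $\rho<\rho_0$, and letting $\rho\downarrow0$ gives a limit strictly above $k_2^2$ (it is at least $K(\rho_0/2)>k_2^2$), a contradiction. The lower bound is symmetric. Together with continuity and the intermediate value theorem, this gives that $K$ is a bijection of $(0,\infty)$ onto $(\sqrt{k_2},k_2^2)$, as already recorded at the end of Theorem~\ref{thm:main}.

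For (i), take $k_1\ge k_2^2$ and any $d,r>0$, and set $\rho=d/r$. Then $k_1>K(\rho)$, so \eqref{signK} gives $c<0$. For (ii), take $1<k_1\le\sqrt{k_2}$; then $k_1<K(\rho)$ for every $\rho$, so $c>0$. For (iii), take $k_1\in(\sqrt{k_2},k_2^2)$. By bijectivity there is a unique $\rho_*$ with $K(\rho_*)=k_1$, and by \eqref{Kzero} $c=0$ if and only if $d/r=\rho_*$. If $d/r<\rho_*$, strict decrease gives $K(d/r)>K(\rho_*)=k_1$, hence $c>0$ by \eqref{signK}. If $d/r>\rho_*$, then $K(d/r)<k_1$, hence $c<0$.

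There is no real obstacle. The only point requiring care is the strictness of the bounds at the endpoints, since (i) and (ii) include the equality cases $k_1=k_2^2$ and $k_1=\sqrt{k_2}$; strict monotonicity of $K$ handles this, as shown above.
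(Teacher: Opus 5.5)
Your proposal is correct and follows the paper's proof, which likewise deduces the corollary immediately from the strict decrease of $K_{k_2}$, its range $(\sqrt{k_2},k_2^2)$, and the sign classification \eqref{signK}. Your explicit check that strict monotonicity turns the endpoint limits into strict bounds is a detail the paper leaves implicit; it is what handles the equality cases $k_1=k_2^2$ and $k_1=\sqrt{k_2}$.
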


% \begin{corollary}[Sharp universal competition thresholds]\label{cor:sharp}
% Fix $k_2>1$.
% \begin{enumerate}[label=\rm(\roman*)]
% \item If $k_1\ge k_2^2$, then $c(d,r,k_1,k_2)<0$ for every $d,r>0$.
% \item If $1<k_1\le\sqrt{k_2}$, then $c(d,r,k_1,k_2)>0$ for every $d,r>0$.
% \item If $\sqrt{k_2}<k_1<k_2^2$, there is a unique $\rho_*=\rho_*(k_1,k_2)>0$ such that
% \[
%  c=0\iff d/r=\rho_*,
% \]
% and
% \[
%  \frac dr<\rho_*\Longrightarrow c>0,
%  \qquad
%  \frac dr>\rho_*\Longrightarrow c<0.
% \]
% \end{enumerate}
% \end{corollary}

\begin{corollary}[Symmetric competition]\label{cor:symmetric}
If $k_1=k_2=k>1$, then
\begin{equation}\label{symzero}
 c(d,r,k,k)=0\quad\Longleftrightarrow\quad d=r,
\end{equation}
and
\begin{equation}\label{symsign}
 \sgn c(d,r,k,k)=\sgn(r-d).
\end{equation}
In particular, when $r=1$, the faster diffuser has the propagation advantage in the sense that $d>1$ implies $c<0$ in the convention \eqref{TW}.
\end{corollary}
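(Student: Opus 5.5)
The plan is to read Corollary \ref{cor:symmetric} off Theorem \ref{thm:main} by computing the sign of the speed along the symmetric slice $k_1=k_2=k$. First I will establish \eqref{symzero} with $\rho=d/r$. By \eqref{Kzero}, $c(d,r,k,k)=0$ holds exactly when $k=K_k(\rho)$. Theorem \ref{thm:main}(ii) gives $K_k(1)=k$, and (i) says $K_k$ is strictly decreasing, so $K_k(\rho)=k$ if and only if $\rho=1$, i.e.\ $d=r$. A self-contained route to the same conclusion also exists. If $c=0$ with $k_1=k_2$, then \eqref{oppositesign} fails, so Theorem \ref{thm:Maxwell} forces $\rho=1$. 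Conversely, when $d=r$ the system with $k_1=k_2$ is invariant under $(U,V)(\xi)\mapsto(V,U)(-\xi)$. Combining this with the exchange identity \eqref{exchange-intro} at $d=r$ gives $c(d,d,k,k)=-d\,c(1/d,1/d,k,k)$. Rescaling $\xi$ shows that this equals $-c(d,d,k,k)$, so $c=0$.

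Next I will derive \eqref{symsign} from \eqref{signK} with $k_1=k$. We have $c>0$ if and only if $k<K_k(\rho)=K_k(\rho)$ evaluated against $K_k(1)$, that is, if and only if $K_k(1)<K_k(\rho)$. Since $K_k$ is strictly decreasing, this holds if and only if $\rho<1$, i.e.\ $d<r$. The same argument gives $c<0$ if and only if $d>r$. Together these give $\sgn c=\sgn(r-d)$.

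The final sentence of the corollary is the special case $r=1$, where $d>1$ yields $c<0$. In the convention \eqref{TW}, the end states \eqref{BC} show that $c<0$ means $v$ invades $u$, so the faster diffuser $v$ has the advantage.

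There is no real obstacle here, since everything reduces to Theorem \ref{thm:main}. The only point needing care is to make sure the strict monotonicity in (i) is invoked in both directions, so that the conclusion is a strict equivalence and not merely an implication.
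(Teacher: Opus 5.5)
Your main argument is correct and is essentially the paper's proof: with $k_1=k_2=k$, \eqref{Kzero} together with $K_k(1)=k$ and the strict decrease of $K_k$ gives $c=0\iff\rho=1$. Then \eqref{signK} gives $c>0\iff K_k(1)<K_k(\rho)\iff\rho<1$, and likewise $c<0\iff\rho>1$. The paper phrases the first step via injectivity, which is equivalent.

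The optional ``self-contained'' converse ($d=r\Rightarrow c=0$) does not work as written. The Maxwell half ($c=0\Rightarrow\rho=1$) is fine. The problem is that, for $d=r\neq1$, the map $(U,V)(\xi)\mapsto(V,U)(-\xi)$ is \emph{not} a symmetry of \eqref{TW}. It sends a front of speed $c$ to a pair satisfying
\[
\tilde U''+\frac{c}{d}\tilde U'+\tilde U(1-\tilde U-k\tilde V)=0,
\qquad
d\tilde V''+cd\,\tilde V'+d\tilde V(1-k\tilde U-\tilde V)=0.
\]
This is a front of \eqref{TW} only if $c/d=cd$, i.e.\ $d=1$ or $c=0$, and $c=0$ is what you are trying to prove. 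Rescaling $\xi$ does not rescue this either. It multiplies the reaction term of the normalized $U$-equation by a constant, so no nontrivial rescaling is admissible, and the relation $c(d,d,k,k)=d\,c(1/d,1/d,k,k)$ is not available a priori. The exchange identity at $d=r$ is the only relation you have, and by itself it does not force $c=0$.

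A correct self-contained version runs as follows. At $d=r=1$, \eqref{exchange-intro} gives $c(1,1,k,k)=-c(1,1,k,k)$, so $c(1,1,k,k)=0$. The corresponding front solves \eqref{SW} with $\rho=1$. It is therefore a monotone front with speed $0$ for the parameters $(d,d,k,k)$, for every $d>0$. Uniqueness of the speed (Proposition~\ref{prop:standard}) then gives $c(d,d,k,k)=0$. Since your main argument does not rely on this aside, the proof stands.
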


\begin{remark}
In the critical case \(k=1\), the result referred to as \cite{Alf-Xiao} shows that the species with the larger propagation speed has the propagation advantage. In particular, when \(r=1\), this again favors the faster diffuser, consistently with the conclusion above for the strong-competition regime.
\end{remark}

A recent preprint by Chen and Wang
\cite{ChenWang2026} establishes this propagation direction near the
strong-competition borderline \(k\downarrow1\), while also noting that
the full symmetric conjecture remains beyond the scope of their
method. Corollary~\ref{cor:symmetric} resolves the full
symmetric case through zero-speed phase-plane analysis rather than a
direct comparison of nonzero wave speeds.

The remainder of the paper is organized as follows. Section~2 derives two universal moment identities for standing fronts and proves the Maxwell-type identity. Section~3 establishes the endpoint asymptotics and the corresponding ordering of standing phase curves. Section~4 develops the phase plane analysis and proves the fixed-competition zero-speed rigidity theorem. Section~5 combines this rigidity with classical traveling wave theory to characterize the global zero-speed threshold. 

\section{The Maxwell identity for standing fronts}\label{sec:identities}

Throughout Sections \ref{sec:identities}--\ref{sec:lens}, we fix \(k_1,k_2>1\), and \((U,V)\) denotes a monotone solution of \eqref{SW}. Standard hyperbolicity of the two end equilibria implies exponential convergence of the profile. In particular,
\begin{equation}\label{derzero}
 U'(\pm\infty)=V'(\pm\infty)=0.
\end{equation}
All integrations by parts below are therefore justified.  Alternatively, \eqref{derzero} follows from monotonicity, $U',-V'\in L^1(\R)$, and boundedness of $U'',V''$.

Since $U'>0$, the projected phase orbit is a graph
\begin{equation}\label{Hgraph}
 V=H(U),\qquad H:(0,1)\to(0,1),\qquad H'<0,
\end{equation}
with $H(0)=1$ and $H(1)=0$ in the endpoint sense.

\bigskip
We first establish two universal moment identities for the projected phase orbit \(H\).
\begin{lemma}\label{lem:moments}
Every monotone standing front satisfies
\begin{equation}\label{moment1}
 \int_0^1uH(u)\,\dd u=\frac{1}{6k_1},
\end{equation}
and
\begin{equation}\label{moment2}
 \int_0^1H(u)^2\,\dd u=\frac{1}{3k_2}.
\end{equation}
\end{lemma}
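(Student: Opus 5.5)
The plan is to obtain both identities as first integrals ("energy" identities) of the two equations in \eqref{SW}, and then to pass from the $\xi$-variable to the $u$-variable using the graph representation \eqref{Hgraph}. The change of variables $u=U(\xi)$ is legitimate because $U'>0$ and $U$ maps $\R$ onto $(0,1)$. Since $V=H(U)$, we then have $\int_\R F(U,V)\,U'\,\dd\xi=\int_0^1F(u,H(u))\,\dd u$ for any continuous bounded $F$.

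\emph{First identity \eqref{moment1}.} Multiply the $U$-equation in \eqref{SW} by $U'$. The first two terms are exact derivatives:
\[
\Bigl(\tfrac12 U'^2+\tfrac12U^2-\tfrac13U^3\Bigr)'-k_1UVU'=0 .
\]
Integrate over $\R$. By \eqref{derzero} and \eqref{BC}, the boundary terms give $\bigl(\tfrac12-\tfrac13\bigr)-0=\tfrac16$. Hence $k_1\int_\R UVU'\,\dd\xi=\tfrac16$. After the change of variables this reads $\int_0^1uH(u)\,\dd u=\frac1{6k_1}$.

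\emph{Second identity \eqref{moment2}.} Multiply the $V$-equation by $V'$:
\[
\Bigl(\tfrac\rho2 V'^2+\tfrac12V^2-\tfrac13V^3\Bigr)'-k_2UVV'=0 .
\]
Integrate over $\R$. Here $V$ goes from $1$ at $-\infty$ to $0$ at $+\infty$, so the boundary terms give $-\tfrac16$, and therefore $\int_\R UVV'\,\dd\xi=-\frac1{6k_2}$. To get a $U'$-weighted integral, write $UVV'=U\bigl(\tfrac12V^2\bigr)'$ and integrate by parts. The boundary term $\tfrac12UV^2$ vanishes at both ends, since $U\to0$ at $-\infty$ and $V\to0$ at $+\infty$. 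This yields $\tfrac12\int_\R V^2U'\,\dd\xi=\frac1{6k_2}$, i.e. $\int_0^1H(u)^2\,\dd u=\frac1{3k_2}$.

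\emph{Justification.} The only point needing care is the justification of the boundary terms and integrability, and this is supplied by the exponential convergence and \eqref{derzero} stated before the lemma. All the integrands $UVU'$, $UVV'$ and $V^2U'$ are bounded by $|U'|+|V'|\in L^1(\R)$, since $U'$ and $V'$ have one sign and $U,V$ are bounded. I do not expect any step to be a genuine obstacle. The main thing to get right is the sign bookkeeping in the second identity, where $V$ is decreasing, together with the integration-by-parts trick that converts the $V'$-weighted moment into a $U'$-weighted one, so that it becomes a statement about $H$.
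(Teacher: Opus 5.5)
Your proof is correct and follows the paper's argument essentially verbatim. You multiply the $U$-equation by $U'$ and the $V$-equation by $V'$, evaluate the boundary terms from the end states and \eqref{derzero}, and convert $\int_\R UVV'\,\dd\xi$ into $-\tfrac12\int_\R U'V^2\,\dd\xi$ by an integration by parts before changing variables to $u=U(\xi)$; your explicit antiderivatives only make the paper's terse computation more transparent.
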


\begin{proof}
Multiply the first equation of \eqref{SW} by $U'$ and integrate over $\R$.  The derivative term vanishes by \eqref{derzero}, hence
\[
 0=\int_\R U(1-U-k_1V)U'\,\dd \xi
 =\frac{1}{6}-k_1\int_\R UVU'\,\dd \xi,
\]
which gives \eqref{moment1}.  Similarly, multiplying the second equation by $V'$ gives
\[
 0=\int_\R V(1-k_2U-V)V'\,\dd \xi
 =-\frac{1}{6}-k_2\int_\R UVV'\,\dd \xi.
\]
Since
\[
 \int_\R UVV'\,\dd \xi
 =-\frac{1}{2}\int_\R U'V^2\,\dd \xi,
\]
we obtain
\[
 \int_\R U'V^2\,\dd \xi=\frac{1}{3k_2},
\]
which is \eqref{moment2} after the change of variable $u=U(\xi)$.
\end{proof}

Identities of this type are closely related to the standing wave moment calculations of Guo--Lin \cite{GuoLin2013}. We now prove the Maxwell identity.

\begin{proof}[Proof of Theorem \ref{thm:Maxwell}]
Set
\[
 M:=\int_\R U'V\,\dd \xi=\int_0^1H(u)\,\dd u.
\]
Multiply the first equation of \eqref{SW} by $\rho V'$, the second by $U'$, and add.  Since
\[
 \rho(U''V'+U'V'')=\rho(U'V')',
\]
integration gives
\begin{equation}\label{cross0}
 \rho I_1+I_2=0,
\end{equation}
where
\[
 I_1:=\int_\R U(1-U-k_1V)V'\,\dd \xi,
 \qquad
 I_2:=\int_\R V(1-k_2U-V)U'\,\dd \xi.
\]
By integration by parts and Lemma \ref{lem:moments},
\begin{align}
 I_1
 &= -M+2\int_\R UU'V\,\dd \xi
      +\frac{k_1}{2}\int_\R U'V^2\,\dd \xi \notag\\
 &= -M+\frac{1}{3k_1}+\frac{k_1}{6k_2},\label{I1}
\end{align}
and
\begin{equation}\label{I2}
 I_2=M-\frac{1}{3k_2}-\frac{k_2}{6k_1}.
\end{equation}
On the other hand,
\begin{align}
 \E
 &=\int_\R(U+V-1)^2U'\,\dd \xi\notag\\
 &=\frac{1}{3}-2M+2\int_\R UVU'\,\dd \xi+\int_\R U'V^2\,\dd \xi\notag\\
 &=\frac{1}{3}-2M+\frac{1}{3k_1}+\frac{1}{3k_2}.
 \label{E-M}
\end{align}
Thus
\begin{equation}\label{M-E}
 M=\frac{1}{6}+\frac{1}{6k_1}+\frac{1}{6k_2}-\frac{\E}{2}.
\end{equation}
Substituting \eqref{M-E} into \eqref{cross0}--\eqref{I2} and simplifying yields exactly \eqref{Maxwell}.

By definition, \(\E\geq0\). If \(\E=0\), then \(U+V\equiv1\) because \(U'>0\). 
Substituting $V = 1-U$ % Substitution 
into the first equation of \eqref{SW}
yields % gives
\[
 U''-(k_1-1)U(1-U)=0.
\]
Multiplying by $U'$ and integrating gives
\[
 0=-(k_1-1)\int_0^1u(1-u)\,\dd u=-\frac{k_1-1}{6},
\]
a contradiction.  Hence $\E>0$.  Since
\[
 \rho(k_1-1)+(k_2-1)>0,
\]
\eqref{oppositesign} follows from \eqref{Maxwell}.
\end{proof}

\begin{remark}
The quantity
\[
\E=\int_0^1(H(u)-(1-u))^2\,\dd u
\]
measures the squared \(L^2(0,1)\)-distance between the projected standing orbit
\(V=H(U)\) and the anti-diagonal \(V=1-U\). Therefore, the Maxwell identity (8)
admits a geometric interpretation: the asymmetry of the model parameters is
balanced by the deviation of the heteroclinic orbit from the symmetric configuration.

More precisely, the second term in (8) represents the imbalance caused by the
competition parameters and the diffusion-to-growth ratio, while the first term
contains the nonnegative geometric defect \(\E\) of the standing orbit. Thus, the
parameter mismatch can only be compensated through the geometric deformation of
the phase curve.

Although \(\E\) has a natural geometric interpretation, it depends on the standing
orbit itself and therefore (8) alone does not yield the uniqueness of the
diffusion-to-growth ratio. In the symmetric case \(k_1=k_2\), since \(\E>0\),
identity (8) immediately gives \(\rho=1\).
\end{remark}

% \begin{remark}[Geometric interpretation]\label{rem:geometry}
% The quantity
% \[
%  \E=\int_0^1\bigl(u+H(u)-1\bigr)^2\,\dd u
% \]
% is the squared $L^2(\dd u)$ distance of the projected standing orbit from the anti-diagonal $V=1-U$.  Thus \eqref{Maxwell} is a system analogue of the scalar Maxwell balance: a parameter imbalance is compensated by a nonnegative geometric defect of the heteroclinic orbit.  When $k_1=k_2$, the parameter term vanishes and \eqref{Maxwell} immediately forces $\rho=1$.
% \end{remark}

\section{Endpoint asymptotics and ordering of standing phase curves}\label{sec:endpoints}

Our phase plane analysis requires a precise ordering of the standing phase curves near both endpoints. This ordering follows from the stable- and unstable-manifold expansions at the hyperbolic equilibria. We record the leading-order asymptotics explicitly because the resonant cases are needed in the comparison.

\begin{lemma}[Left endpoint]\label{lem:left}
Let $H_\rho$ be a standing phase curve for fixed $k_1,k_2>1$.  As $u\downarrow0$,
\begin{equation}\label{leftasympt}
1-H_\rho(u)
\sim
\begin{cases}
\displaystyle
\frac{k_2}{1-\rho(k_1-1)}\,u,
&
\rho(k_1-1)<1,
\\[3mm]
\displaystyle
\frac{k_2}{2}\,u|\log u|,
&
\rho(k_1-1)=1,
\\[3mm]
\displaystyle
C_-(\rho)\,
u^{1/\sqrt{\rho(k_1-1)}},
&
\rho(k_1-1)>1,
\end{cases}
\end{equation}
where $C_-(\rho)>0$ in the last case.  Consequently, if $0<\rho_1<\rho_2$ and both standing curves exist for the same $k_1,k_2$, then
\begin{equation}\label{leftorder}
 H_{\rho_2}(u)<H_{\rho_1}(u)
\end{equation}
for all sufficiently small $u>0$.
\end{lemma}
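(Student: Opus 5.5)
The plan is to linearize \eqref{SW} at the left equilibrium $(0,1)$, identify the unstable manifold along which the front leaves $(0,1)$ as $\xi\to-\infty$, and then express $1-V$ as a function of $U$. Write $p=U$ and $q=1-V$. Near $(0,1)$ the system reads
\[
p''=(k_1-1)p+O(p^2+pq),\qquad \rho q''=q-k_2p+O(p^2+pq+q^2),
\]
where the $q$-equation comes from $V(1-k_2U-V)=(1-q)(q-k_2p)$. The linearization is triangular. The $p$-equation decouples and has unstable rate $\lambda_1=\sqrt{k_1-1}$. The $q$-equation has homogeneous unstable rate $\lambda_2=1/\sqrt\rho$ and is forced by $k_2p$. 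The unstable manifold is two-dimensional, and the front lies on it. Since $U\to0$ with $U>0$, $p$ must carry the $e^{\lambda_1\xi}$ mode: the $p$-equation is scalar to leading order, so $p\sim a e^{\lambda_1\xi}$ with $a>0$, and this requires the standard hyperbolic linearization (Hartman--Grobman or a stable-manifold expansion), which I take as standard.

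Next I would solve the forced $q$-equation $\rho q''-q=-k_2 a e^{\lambda_1\xi}+\dots$ case by case.

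\emph{Case $\rho\lambda_1^2<1$, i.e.\ $\rho(k_1-1)<1$.} Then $\lambda_1<\lambda_2$. The particular solution $q_p=\frac{k_2a}{1-\rho(k_1-1)}e^{\lambda_1\xi}$ dominates the homogeneous term $be^{\lambda_2\xi}$. This gives $q\sim\frac{k_2}{1-\rho(k_1-1)}p$, the first line of \eqref{leftasympt}.

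\emph{Resonant case $\lambda_1=\lambda_2$.} The particular solution is $q_p=\frac{k_2a}{2\rho\lambda_1}\,\xi e^{\lambda_1\xi}$ up to sign. With $\rho=1/\lambda_1^2$, the coefficient is $\frac{k_2\lambda_1}{2}$, so $q\sim \frac{k_2 a\lambda_1}{2}|\xi|e^{\lambda_1\xi}$. Since $|\xi|\sim|\log p|/\lambda_1$, this gives $q\sim\frac{k_2}{2}p|\log p|$.

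\emph{Case $\lambda_2<\lambda_1$.} The homogeneous mode $be^{\lambda_2\xi}$ dominates, provided $b\neq0$. Then $q\sim b\,e^{\lambda_2\xi}\sim C_-p^{\lambda_2/\lambda_1}=C_-p^{1/\sqrt{\rho(k_1-1)}}$. I must show $b>0$. If $b=0$, then $q\sim\frac{k_2}{1-\rho(k_1-1)}p$, and this coefficient is negative. That contradicts $V<1$, i.e.\ $q>0$, so $b\ne0$. Positivity of $q$ then forces $b>0$. The nonlinear corrections are of higher order in $e^{\xi}$ in each case, so they do not affect the leading terms. I expect this step, together with rigorously controlling the nonlinear remainders in the resonant case, to be the main technical point; I would handle it with a variation-of-constants integral equation for $q$ on $(-\infty,\xi_0]$.

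For the ordering, let $\rho_1<\rho_2$ and compare $1-H_{\rho_i}(u)$, since \eqref{leftorder} is equivalent to $1-H_{\rho_2}>1-H_{\rho_1}$ for small $u$. The function $\rho\mapsto$ "growth order" increases with $\rho$. In the linear regime the coefficient $\frac{k_2}{1-\rho(k_1-1)}$ is strictly increasing in $\rho$. The $u|\log u|$ term beats any $cu$. The power $u^{\alpha}$ with $\alpha=1/\sqrt{\rho(k_1-1)}<1$ beats $u|\log u|$ and $cu$, and a smaller $\alpha$ (larger $\rho$) beats a larger one. Checking all pairs of regimes then gives $1-H_{\rho_2}(u)>1-H_{\rho_1}(u)$ for small $u$, which is \eqref{leftorder}.
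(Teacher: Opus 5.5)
Your proposal is correct and follows essentially the same route as the paper. You linearize at $(0,1)$ in the variables $(U,1-V)$, use the triangular structure with exponents $\sqrt{k_1-1}$ and $1/\sqrt{\rho}$ to split into three regimes (getting $C_->0$ from the negative forced-mode coefficient $k_2/(1-\rho(k_1-1))$), and derive the ordering by comparing growth orders in $\rho$. Your explicit resonant-coefficient computation and variation-of-constants control of the nonlinear remainders just fill in details the paper leaves implicit.
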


\begin{proof}
Set $W=1-V$.  Near $(U,W)=(0,0)$, \eqref{SW} has linearization
\begin{equation}\label{leftlin}
 U''-(k_1-1)U=0,
 \qquad
 \rho W''-W+k_2U=0.
\end{equation}
Let $\lambda=\sqrt{k_1-1}$.  Along the heteroclinic,
\[
U(\xi)
=
Ae^{\lambda\xi}(1+o(1)),
\qquad
A>0,
\qquad
\xi\to-\infty.
\]
% The homogeneous exponent for the $W$-equation is $\mu=1/\sqrt\rho$.
The positive characteristic exponent of the homogeneous
\(W\)-equation is
$
\mu=\frac{1}{\sqrt{\rho}}.
$
If \(\lambda<\mu\), the forced \(U\)-mode dominates, and\[
 W=\frac{k_2}{1-\rho(k_1-1)}U+o(U).
\]
% If $\lambda=\mu$, the resonant particular solution is proportional to $\xi e^{\lambda \xi}$ and yields
% \[
%  W=\frac b2U|\log U|+O(U).
% \]
If \(\lambda=\mu\), resonance produces a particular solution
proportional to \(\xi e^{\lambda\xi}\), yielding
\[
W
=
\frac{k_2}{2}\,U|\log U|+O(U).
\]
If $\lambda>\mu$, the slower homogeneous $W$-mode dominates, and
\[
 W=C e^{\mu \xi}(1+o(1))=C_-(\rho)U^{\mu/\lambda}(1+o(1)).
\]
% The coefficient must be positive: if it vanished, the forced $U$-mode would have coefficient $k_2/[1-\rho(k_1-1)]<0$, contradicting $W=1-V>0$ near $-\infty$.
The coefficient \(C_-(\rho)\) must be positive. Indeed, if the
homogeneous mode vanished, then the forced \(U\)-mode would have the
coefficient
$
\frac{k_2}{1-\rho(k_1-1)}<0,
$
contradicting \(W=1-V>0\) near \(-\infty\).

% The ordering follows directly.  In the first regime the linear coefficient is strictly increasing in $\rho$; in the third regime the exponent $1/\sqrt{\rho(k_1-1)}$ is strictly decreasing in $\rho$, so exponent comparison dominates any positive leading coefficient.  The resonant term lies strictly between the neighboring orders because, as $u\downarrow0$,
% \[
%  u^\theta\gg u|\log u|\gg u\qquad(0<\theta<1).
% \]
% Thus $1-H_{\rho_2}>1-H_{\rho_1}$ near $0$, proving \eqref{leftorder}.
The claimed ordering follows from these asymptotics. In the first regime, the coefficient
$
\frac{k_2}{1-\rho(k_1-1)}
$
is strictly increasing in \(\rho\). In the third regime, the exponent
$
\frac{1}{\sqrt{\rho(k_1-1)}}
$
is strictly decreasing in \(\rho\), and the comparison of powers
dominates the positive leading coefficients. The resonant scale lies
strictly between the two neighboring orders because
\[
u^\theta\gg u|\log u|\gg u
\qquad
\text{as }u\downarrow0,
\qquad
0<\theta<1.
\]
Therefore,
\[
1-H_{\rho_2}(u)>1-H_{\rho_1}(u)
\]
near \(u=0\), which proves \eqref{leftorder}.
\end{proof}

% \begin{lemma}[Right endpoint]\label{lem:right}
% As $u\uparrow1$,
% \begin{equation}\label{rightasympt}
%  H_\rho(u)\sim
%  \begin{cases}
%  C_+(\rho)(1-u)^{\sqrt{(k_2-1)/\rho}},
%      &\rho<k_2-1,\\[2mm]
%  \displaystyle \frac{2}{k_1}\frac{1-u}{|\log(1-u)|},
%      &\rho=k_2-1,\\[3mm]
%  \displaystyle \frac{\rho-(k_2-1)}{a\rho}(1-u),
%      &\rho>k_2-1,
%  \end{cases}
% \end{equation}
% where $C_+(\rho)>0$ in the first case.  Consequently, if $0<\rho_1<\rho_2$, then
% \begin{equation}\label{rightorder}
%  H_{\rho_2}(u)>H_{\rho_1}(u)
% \end{equation}
% for all $u<1$ sufficiently close to $1$.
% \end{lemma}

\begin{lemma}[Right endpoint]\label{lem:right}
As \(u\uparrow1\),
\begin{equation}\label{rightasympt}
H_\rho(u)
\sim
\begin{cases}
\displaystyle
C_+(\rho)
(1-u)^{\sqrt{(k_2-1)/\rho}},
&
\rho<k_2-1,
\\[3mm]
\displaystyle
\frac{2}{k_1}\,
\frac{1-u}{|\log(1-u)|},
&
\rho=k_2-1,
\\[4mm]
\displaystyle
\frac{\rho-(k_2-1)}{k_1\rho}\,(1-u),
&
\rho>k_2-1,
\end{cases}
\end{equation}
where \(C_+(\rho)>0\) in the first case. Consequently, if
\(0<\rho_1<\rho_2\), then
\begin{equation}\label{right_endpoint_ordering}
H_{\rho_2}(u)>H_{\rho_1}(u)
\end{equation}
for all \(u<1\) sufficiently close to \(1\).
\end{lemma}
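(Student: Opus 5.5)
I will mirror the proof of Lemma \ref{lem:left}, now linearizing at the right equilibrium $(U,V)=(1,0)$ and letting $\xi\to+\infty$. Set $Z=1-U$. The linearization of \eqref{SW} at $(Z,V)=(0,0)$ is
\[
 Z''-Z+k_1V=0,\qquad \rho V''-(k_2-1)V=0 .
\]
This system is triangular in the opposite direction from the left endpoint: now $V$ decouples and forces $Z$. The $V$-equation has decaying exponent $\nu=\sqrt{(k_2-1)/\rho}$. The homogeneous $Z$-equation has decaying exponent $1$.

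Along the heteroclinic, hyperbolicity gives $V(\xi)=Be^{-\nu\xi}(1+o(1))$ with $B>0$, since $V>0$ and the linear $V$-equation is scalar. For $Z$ I will use variation of constants. The general solution is $Z=Ce^{-\xi}+Z_p$, where $Z_p$ is the forced response to $k_1V$.

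\textbf{Case $\nu\neq1$.} The forced response is
\[
 Z_p=\frac{k_1B}{1-\nu^2}e^{-\nu\xi}=\frac{k_1\rho}{\rho-(k_2-1)}V .
\]
\begin{itemize}
\item If $\nu<1$, i.e.\ $\rho>k_2-1$, the forced mode dominates. Then $Z\sim\frac{k_1\rho}{\rho-(k_2-1)}V$, which inverts to $H_\rho(u)\sim\frac{\rho-(k_2-1)}{k_1\rho}(1-u)$.
\item If $\nu>1$, i.e.\ $\rho<k_2-1$, the homogeneous mode $Ce^{-\xi}$ decays slower. Then $Z\sim Ce^{-\xi}$ and $V\sim B(Z/C)^{\nu}$, giving $H_\rho(u)\sim C_+(\rho)(1-u)^{\nu}$.
\end{itemize}
To get $C_+>0$ in the second subcase I need $C\neq0$ and then $C>0$. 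If $C=0$, the forced coefficient $\frac{k_1\rho}{\rho-(k_2-1)}$ would be negative, so $Z<0$ for large $\xi$. That contradicts $U<1$. The same argument rules out $C<0$, so $C>0$.

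\textbf{Resonant case $\nu=1$.} The particular solution is $\frac{k_1B}{2}\xi e^{-\xi}$, which dominates $Ce^{-\xi}$. Hence $Z\sim\frac{k_1}{2}\xi V$. Since $|\log Z|\sim\xi$, this inverts to $V\sim\frac{2}{k_1}\frac{Z}{|\log Z|}$.

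The main technical point is justifying that these linear asymptotics hold for the nonlinear orbit with controlled error terms. This includes the claim that $V$ has pure exponential rate $\nu$ and that the nonlinear corrections, which are quadratic, are $o$ of each leading term. I will handle this with the standard stable-manifold expansion, exactly as in Lemma \ref{lem:left}. The resonant case needs the $\xi e^{-\xi}$ correction, which follows from the Hartman–Grobman/normal-form expansion or directly from the variation-of-constants integral with the known decay of $V$.

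\textbf{Ordering.} Let $s=1-u\downarrow0$ and $\rho_1<\rho_2$.
\begin{itemize}
\item In the regime $\rho>k_2-1$, the coefficient $\frac{\rho-(k_2-1)}{k_1\rho}=\frac{1}{k_1}\bigl(1-\frac{k_2-1}{\rho}\bigr)$ is strictly increasing in $\rho$.
\item In the regime $\rho<k_2-1$, the exponent $\sqrt{(k_2-1)/\rho}>1$ is strictly decreasing in $\rho$. So the larger $\rho$ gives a strictly larger power $s^{\nu_2}\gg s^{\nu_1}$, regardless of the positive constants.
\item Across regimes, $s^{\theta}\ll s/|\log s|\ll s$ for $\theta>1$. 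Hence the power-law curves lie below the resonant curve, which lies below the linear curves.
\end{itemize}
In every combination, therefore, $H_{\rho_2}(u)>H_{\rho_1}(u)$ for $u$ near $1$, which is \eqref{right_endpoint_ordering}.
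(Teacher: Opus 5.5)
Your proposal is correct and follows essentially the same route as the paper: you linearize at $(1,0)$ with $1-U$ as the forced variable, and read off the three regimes by comparing the decoupled $V$-rate $\sqrt{(k_2-1)/\rho}$ with the homogeneous rate $1$, including the resonant $\xi e^{-\xi}$ term. You then order the curves by comparing coefficients, exponents, and the intermediate scale $s/|\log s|$. Your justification that the homogeneous coefficient $C$ is nonzero and positive (otherwise $1-U$ would eventually be negative) is a bit more careful than the paper's one-line appeal to $V>0$, and mirrors the argument the paper itself gives at the left endpoint.
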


\begin{proof}
Set
\[
\eta=1-U,
\qquad
\zeta=V.
\]
Near \((\eta,\zeta)=(0,0)\), system \eqref{SW} has the linearization
\begin{equation}\label{right_linearization}
\eta''-\eta+k_1\zeta=0,
\qquad
\rho\zeta''-(k_2-1)\zeta=0.
\end{equation}
The decay exponent of \(\zeta\) is
\[
\mu=\sqrt{\frac{k_2-1}{\rho}},
\]
whereas the homogeneous \(\eta\)-mode has exponent \(1\).

If \(\mu>1\), the homogeneous \(\eta\)-mode decays more slowly, so
\[
\zeta\asymp\eta^\mu.
\]
If \(\mu=1\), resonance yields
\[
\zeta
\sim
\frac{2}{k_1}\,
\frac{\eta}{|\log\eta|}.
\]
If \(\mu<1\), the slower \(\zeta\)-mode forces
\[
\eta
\sim
\frac{k_1\rho}{\rho-(k_2-1)}\,\zeta,
\]
or equivalently,
\[
\zeta
\sim
\frac{\rho-(k_2-1)}{k_1\rho}\,\eta.
\]
This gives \eqref{rightasympt}. The coefficient
\(C_+(\rho)\) is positive because \(V>0\).

The ordering is obtained as in the proof of Lemma \ref{lem:left}. In the first regime, the exponent
\[
\sqrt{\frac{k_2-1}{\rho}}
\]
is strictly decreasing in \(\rho\), while in the third regime the
coefficient
\[
\frac{1}{k_1}
\left(
1-\frac{k_2-1}{\rho}
\right)
\]
is strictly increasing in \(\rho\). The resonant scale lies strictly
between the two neighboring orders. This proves
\eqref{right_endpoint_ordering}.
\end{proof}

% \begin{proof}
% Set $\eta=1-U$ and $\zeta=V$.  Near $(\eta,\zeta)=(0,0)$,
% \begin{equation}\label{rightlin}
%  \eta''-\eta+k_1q=0,
%  \qquad
%  \rho \zeta''-(k_2-1)\zeta=0.
% \end{equation}
% The decay exponent for $\zeta$ is $\mu=\sqrt{(k_2-1)/\rho}$, while the homogeneous $\eta$-mode has exponent $1$.  If $\mu>1$, the $\eta$-mode is slower, so $\zeta\asymp \eta^\mu$.  If $\mu=1$, resonance gives $\zeta\sim(2/k_1)\eta/|\log \eta|$.  If $\mu<1$, the slower $\zeta$-mode forces
% \[
%  \eta\sim \frac{a\rho}{\rho-(k_2-1)}\zeta,
% \]
% which gives the last line of \eqref{rightasympt}.  Positivity of $C_+$ follows from $V>0$.  The ordering is obtained as in Lemma \ref{lem:left}: the exponent decreases with $\rho$ in the first regime, and the linear coefficient
% \[
%  \frac{1}{k_1}\left(1-\frac{k_2-1}{\rho}\right)
% \]
% is strictly increasing in the third regime.  The resonant scale lies strictly between the two neighboring orders.
% \end{proof}

\section{Rigidity of the diffusion-to-growth ratio for standing fronts}\label{sec:lens}

We now prove Theorem \ref{thm:rigidity}. The argument uses only the phase graphs, the endpoint orderings, and the two moment identities obtained in Lemma \ref{lem:moments}.

Assume for contradiction that the same $k_1,k_2>1$ admit standing fronts at
\[
 0<\rho_1<\rho_2.
\]
Let $H_i$ denote the corresponding phase graphs and set
\begin{equation}\label{DG}
 D(u):=H_2(u)-H_1(u),
 \qquad
 G(u):=H_1(u)+H_2(u).
\end{equation}
By Lemmas \ref{lem:left}--\ref{lem:right},
\begin{equation}\label{endsigns}
 D(u)<0\quad(u\downarrow0),
 \qquad
 D(u)>0\quad(u\uparrow1).
\end{equation}
Moreover, Lemma \ref{lem:moments} gives the two zero moments
\begin{equation}\label{Dmoments}
 \int_0^1uD(u)\,\dd u=0,
 \qquad
 \int_0^1G(u)D(u)\,\dd u=0.
\end{equation}

We first show that the graph of $D$ can change sign for only finite times in \((0,1)\).
\begin{lemma}\label{lem:finite}
The set of zeros of $D$ in $(0,1)$ is finite unless $D\equiv0$.  The latter alternative is impossible when $\rho_1\ne\rho_2$.
\end{lemma}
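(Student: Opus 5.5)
Both assertions will come from analyticity of the phase graphs $H_i$ on $(0,1)$. The second assertion, that $D\equiv0$ is impossible when $\rho_1\ne\rho_2$, is immediate from Lemma~\ref{lem:left} (or Lemma~\ref{lem:right}). By \eqref{endsigns}, $D<0$ for all sufficiently small $u>0$, so $D\not\equiv0$. Alternatively, if $H_1=H_2=H$, the two standing fronts trace the same curve. In the $U$-equation, $U'' = -U(1-U-k_1H(U))$ involves only $H$. Hence the two fronts solve the same autonomous scalar equation with the same heteroclinic end states, so they coincide up to translation. The $V$-equation then gives $\rho_1 V''=\rho_2 V''$. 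Since $V''\not\equiv0$, this forces $\rho_1=\rho_2$.

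For finiteness of the zero set, I first show that each $H_i$ is real analytic on $(0,1)$. The profile $(U,V)$ solves an analytic ODE, so it is real analytic in $\xi$. Since $U'>0$, the inverse $\xi=U^{-1}(u)$ is real analytic on $(0,1)$ by the analytic inverse function theorem. Hence $H_i=V\circ U^{-1}$ is real analytic on $(0,1)$, and so is $D$. If $D\not\equiv0$, its zeros are isolated in $(0,1)$ by the identity theorem. They could only accumulate at $0$ or $1$. But \eqref{endsigns} shows $D\ne0$ on $(0,\delta)\cup(1-\delta,1)$ for some $\delta>0$. The zeros therefore lie in the compact interval $[\delta,1-\delta]$, where isolatedness gives finiteness.

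The main obstacle is the endpoint control, since analyticity on the open interval alone allows zeros accumulating at $0$ or $1$. That is exactly what Lemmas~\ref{lem:left}--\ref{lem:right} exclude, through the strict orderings \eqref{leftorder} and \eqref{right_endpoint_ordering}. I would simply invoke them here, noting that they hold on a full neighborhood of each endpoint and not merely along a sequence.
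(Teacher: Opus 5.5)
Your proof is correct. The finiteness part is the same as the paper's: analyticity of $H_i$ via the analytic inverse function theorem, isolated zeros, and \eqref{endsigns} to keep the zeros away from $0$ and $1$.

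For the impossibility of $D\equiv0$ you take a different and shorter route. You observe that \eqref{endsigns} already gives $D<0$ near $0$, which rules out $D\equiv0$ at once. Since the paper itself invokes \eqref{endsigns} for finiteness, its separate argument is not strictly needed once Lemmas~\ref{lem:left}--\ref{lem:right} are granted.

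The paper instead argues independently of the endpoint asymptotics. If $H_1=H_2=H$, the first integrals \eqref{Pformula} and \eqref{Qformula} show that $(U_i')^2$ and $\rho_i(V_i')^2$ are determined by $H$ alone. The relation \eqref{PQ_relation}, $Q=\rho_iP(H')^2$ with $P,Q>0$ and $H'\neq0$, then forces $\rho_1=\rho_2$. This buys two things. It uses only energy identities, not the resonant/nonresonant endpoint case analysis, whose proofs are only sketched. It also introduces exactly the quantities $P_i,Q_i$ and the relation \eqref{PQ_relation} that drive the proof of Theorem~\ref{thm:rigidity}.

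Your backup argument (the fronts coincide up to translation, hence $(\rho_1-\rho_2)V''=0$) is also valid. Its key step, ``same autonomous scalar equation, hence same heteroclinic up to translation,'' is best justified by that same energy identity. From $(U_i')^2=P(U_i)$ one gets $\mathrm{d}\xi_i/\mathrm{d}u=P(u)^{-1/2}$ for both $i$, so $\xi_1-\xi_2$ is constant. This avoids any question about Lipschitz continuity of $u\mapsto u(1-u-k_1H(u))$ at the endpoints.
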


\begin{proof}
The ODE system \eqref{SW} is analytic.  Since $U_i'>0$, the analytic inverse function theorem shows that each $H_i$ is real analytic on $(0,1)$; hence so is $D$.  If $D\not\equiv0$, its zeros are isolated.  By \eqref{endsigns}, no zeros occur in fixed neighborhoods of $0$ and $1$, so all zeros lie in a compact subinterval and are finite.

% If $D\equiv0$, write the common graph as $H$.  Define
% \begin{equation}\label{Pdef}
%  P(u):=(U')^2
%  =-2\int_0^u s\bigl(1-s-aH(s)\bigr)\,\dd s.
% \end{equation}
% Thus $P$ is determined by $H$.  Likewise, with
% \begin{equation}\label{Qdef}
%  Q(u):=\rho(V')^2,
% \end{equation}
% one obtains by integrating the second equation
% \begin{equation}\label{Qformula}
%  Q(u)=\frac13-H^2+\frac23H^3+k_2uH^2
%       -k_2\int_0^uH(s)^2\,\dd s.
% \end{equation}
% Hence $Q$ is also determined by $H$, while
% \begin{equation}\label{QPH}
%  Q=\rho P(H')^2.
% \end{equation}
% At any interior point, $P>0$, $Q>0$ and $H'\ne0$, so \eqref{QPH} uniquely determines $\rho$.  Therefore $\rho_1=\rho_2$, a contradiction.
If \(D\equiv0\), write the common phase graph as \(H\). For
\(i=1,2\), let
\[
\xi_i(u):=U_i^{-1}(u),
\qquad 0<u<1,
\]
and define
\begin{equation}\label{Pdef}
P_i(u)
:=
\bigl[U_i'(\xi_i(u))\bigr]^2.
\end{equation}
Using the first equation of \eqref{SW}, we obtain
\begin{equation}\label{Pformula}
P_i(u)
=
-2\int_0^u
s\bigl(1-s-k_1H(s)\bigr)\,\dd s.
\end{equation}
Hence \(P_1=P_2\); we denote their common value by \(P\).

Likewise, define
\begin{equation}\label{Qdef}
Q_i(u)
:=
\rho_i
\bigl[V_i'(\xi_i(u))\bigr]^2.
\end{equation}
Integrating the second equation of \eqref{SW} gives
\begin{equation}\label{Qformula}
Q_i(u)
=
\frac{1}{3}
-H(u)^2
+\frac{2}{3}H(u)^3
+k_2uH(u)^2
-k_2\int_0^u H(s)^2\,\dd s.
\end{equation}
Thus \(Q_1=Q_2\); denote their common value by \(Q\).

Since
\[
V_i'(\xi_i(u))
=
H'(u)U_i'(\xi_i(u)),
\]
we have, for \(i=1,2\),
\begin{equation}\label{PQ_relation}
Q(u)
=
\rho_iP(u)\bigl(H'(u)\bigr)^2.
\end{equation}
At every interior point \(u\in(0,1)\),
\[
P(u)>0,\qquad
Q(u)>0,\qquad
H'(u)\neq0.
\]
Consequently, \eqref{PQ_relation} uniquely determines \(\rho_i\), and
therefore \(\rho_1=\rho_2\), a contradiction.
\end{proof}

Tangential zeros of \(D\) that do not change sign do not affect the argument. List the sign-changing zeros as
\begin{equation}\label{zlist}
 z_1<z_2<\cdots<z_{2m+1}.
\end{equation}
The number of sign-changing zeros is odd by \eqref{endsigns}. The case \(m=0\) already contradicts the two moment identities and is also covered by the argument below. Set
\begin{equation}\label{ecdef}
 e_0=0,\quad e_j=z_{2j}\ (1\le j\le m),\quad e_{m+1}=1,
 \qquad
 c_j=z_{2j+1}\ (0\le j\le m).
\end{equation}
Thus on every phase-plane lens $(e_j,e_{j+1})$,
\begin{equation}\label{lenssign}
 % D<0\quad\text{on }(e_j,c_j),
 % \qquad
 % D>0\quad\text{on }(c_j,e_{j+1}).
D(u)\leq0
\quad\text{for }u\in(e_j,c_j),
\qquad
D(u)\geq0
\quad\text{for }u\in(c_j,e_{j+1}).
\end{equation}

Define the cumulative moments
\begin{equation}\label{ABdef}
 A(u):=\int_0^u sD(s)\,\dd s,
 \qquad
 B(u):=\int_0^u G(s)D(s)\,\dd s.
\end{equation}
Then by \eqref{Dmoments},
\begin{equation}\label{ABends}
 A(0)=A(1)=B(0)=B(1)=0.
\end{equation}
Finally set
\begin{equation}\label{qdef}
 q(u):=\frac{G(u)}{u},\qquad 0<u<1.
\end{equation}
Since $G>0$ and $G'<0$,
\begin{equation}\label{qdec}
 q'(u)=\frac{uG'(u)-G(u)}{u^2}<0.
\end{equation}

\begin{lemma}\label{lem:multilens}
There exists a downward sign-changing crossing $u_*=z_{2j_*}$ such that
\begin{equation}\label{ABpositive}
 A(u_*)>0,
 \qquad
 B(u_*)>0.
\end{equation}
\end{lemma}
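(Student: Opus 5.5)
The plan is to rewrite $B$ in terms of $A$ and the decreasing weight $q$. Then two facts do the work: $A$ has its local maxima exactly at the downward crossings $e_j=z_{2j}$, and $B(1)=0$ forces $A$ to be positive somewhere. The key observation is
\[
B'(u)=G(u)D(u)=q(u)\,uD(u)=q(u)A'(u).
\]
Integrating by parts, and controlling the boundary term at $0$ via $A(u)=O(u^2)$, $q(u)=O(1/u)$ (so $qA\to0$), gives
\begin{equation*}
B(u)=q(u)A(u)+F(u),\qquad F(u):=\int_0^u |q'(s)|\,A(s)\,\dd s ,
\end{equation*}
where we used $q'<0$ from \eqref{qdec}. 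Since $A(1)=B(1)=0$ by \eqref{ABends}, this yields $F(1)=0$. At $u=1$ I only need $q(u)A(u)\to0$; this holds because $A(1)=0$ and $G(1)=0$.

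Next I record the shape of $A$. By \eqref{lenssign}, $A'=uD$ is $\le0$ on $(e_j,c_j)$ and $\ge0$ on $(c_j,e_{j+1})$. Hence the interior local maxima of $A$ occur only at $e_1,\dots,e_m$, and the local minima occur at the $c_j$. By \eqref{endsigns}, $A<0$ on a right neighbourhood of $0$. Also $A$ is increasing to $A(1)=0$ near $1$, so $A<0$ on a left neighbourhood of $1$. Consequently $F<0$ just after $0$ and $F>0$ just before $1$, because $F$ decreases to $F(1)=0$ there. Since $|q'|>0$ and $A\not\equiv0$, the identity $F(1)=0$ forces $A>0$ somewhere. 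In particular $m\ge1$, which also disposes of the case $m=0$.

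Now I choose the crossing. Let $u_2\in(0,1)$ be the first zero of $F$ after $0$; it satisfies $u_2<1$ because $F>0$ near $1$. Just before $u_2$, $F$ increases from negative values, so $A>0$ on some interval $(u_2-\delta,u_2)$. Let $(a,b)$ be the maximal component of $\{A>0\}$ containing this interval. Then $A(a)=A(b)=0$, and $b<1$ because $A<0$ near $1$. The maximum of $A$ on $[a,b]$ is therefore an interior local maximum, so it sits at some $u_*=e_{j_*}=z_{2j_*}$ with $A(u_*)>0$.

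I then split into two cases.
\begin{itemize}
\item If $u_*>u_2$, then $F(u_*)>F(u_2)=0$, since $A>0$ on $[u_2,u_*]$. Hence $B(u_*)=qA+F>0$.
\item If $u_*\le u_2$, then $0\le A\le A(u_*)$ on $[u_*,u_2]$. This gives
\[
F(u_*)=-\int_{u_*}^{u_2}|q'|A\,\dd s\ \ge\ -A(u_*)\bigl(q(u_*)-q(u_2)\bigr),
\]
and therefore $B(u_*)\ge q(u_2)A(u_*)>0$, because $q(u_2)=G(u_2)/u_2>0$ for $u_2<1$.
\end{itemize}
Either case gives \eqref{ABpositive}.

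The main obstacle is this selection step, choosing which downward crossing to use. Positivity of $A$ alone does not control the sign of $F$. The device of taking the first return $u_2$ of $F$ to zero, combined with the maximality of $A$ on its positivity component, is what converts the global constraint $F(1)=0$ into a pointwise bound. I would also double-check the endpoint justifications: that $qA\to0$ at both ends, and that $u_2<1$ strictly so that $q(u_2)>0$.
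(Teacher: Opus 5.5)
Your proof is correct, but it is organized differently from the paper's.

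\textbf{The paper's route.} The paper never writes an exact formula for $B$. On each lens $(e_j,e_{j+1})$ it freezes the weight at the up-crossing $c_j$ and proves
$(B_{j+1}-B_j)-q(c_j)(A_{j+1}-A_j)=\int_{e_j}^{e_{j+1}}u\,[q(u)-q(c_j)]D(u)\,\dd u<0$.
It sums these inequalities over $j$ and applies discrete summation by parts to get $\sum_{j=1}^m\bigl(q(c_{j-1})-q(c_j)\bigr)A(e_j)>0$. It then takes $j_*$ to be the \emph{largest} index with $A(e_{j_*})>0$. Summing the lens inequalities from $j_*$ to $m$ gives $B(e_{j_*})>0$.

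\textbf{Your route.} You perform a continuous Abel summation instead. This gives the exact identity $B=qA+\int_0^u|q'|A$ and the weighted zero-mean condition $F(1)=0$.

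\textbf{What each buys.} The paper's version needs only that $q$ is strictly decreasing, and it involves no improper integrals or boundary limits. Yours needs the endpoint checks you flag. These do hold: near $0$ you have $|qA|\le u$, and $\int_0^u|q'||A|<\infty$ because $|q'||A|\le 1+\tfrac{s}{2}|G'(s)|$ with $G'$ integrable. You also need the piecewise strict monotonicity of $A$ to justify $A>0$ just to the left of $u_2$. In exchange you get an exact representation and a transparent picture of the shape of $A$.

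\textbf{A simpler selection step.} Your selection step is more elaborate than it needs to be. Write $F(u)=-\int_u^1|q'|A$, and take $u_*=e_{j_*}$ to be the last downward crossing with $A(u_*)>0$. Then $A\le A(u_*)$ on $[u_*,c_{j_*}]$ and $A\le 0$ on $[c_{j_*},1]$. Hence $B(u_*)\ge q(c_{j_*})A(u_*)>0$ directly, with no auxiliary zero $u_2$ and no case split. This is exactly the continuous analogue of the paper's argument.
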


\begin{proof}
Write
\[
 A_j:=A(e_j),\qquad B_j:=B(e_j),\qquad q_j:=q(c_j).
\]
On the $j$th lens, \eqref{lenssign} and the strict decrease of $q$ give
\begin{align}
 &(B_{j+1}-B_j)-q_j(A_{j+1}-A_j)\notag\\
 &\quad=\int_{e_j}^{e_{j+1}}u\,[q(u)-q_j]D(u)\,\dd u<0.
 \label{lensineq}
\end{align}
Indeed, the integrand is strictly negative on both sides of $c_j$ except at isolated zeros.

Summing \eqref{lensineq} for $j=0,\dots,m$ and using \eqref{ABends} yields
\[
 \sum_{j=0}^m q_j(A_{j+1}-A_j)>0.
\]
Discrete summation by parts gives
\begin{equation}\label{summationA}
 \sum_{j=1}^m(q_{j-1}-q_j)A_j>0.
\end{equation}
If \(m=0\), then the left-hand side of the preceding inequality is
the empty sum and hence equals zero, contradicting its strict
positivity. Therefore \(m\geq1\).

% Since $q_{j-1}-q_j>0$, some $A_j$ is positive.  
Since \(q_{j-1}-q_j>0\), there exists at least one index
\(j\in\{1,\ldots,m\}\) such that \(A_j>0\).
Let $j_*$ be the largest index with $A_{j_*}>0$.  Then $A_j\le0$ for $j>j_*$.  Summing \eqref{lensineq} from $j=j_*$ to $m$ gives
\begin{align*}
 -B_{j_*}
 &<\sum_{j=j_*}^m q_j(A_{j+1}-A_j)\\
 &=-q_{j_*}A_{j_*}
   +\sum_{j=j_*+1}^m(q_{j-1}-q_j)A_j<0.
\end{align*}
Hence $B_{j_*}>0$.  The point $e_{j_*}=z_{2j_*}$ is a downward crossing by construction, and \eqref{ABpositive} follows.
\end{proof}

\medskip
We can now complete the proof of Theorem \ref{thm:rigidity}.

\begin{proof}[Proof of Theorem \ref{thm:rigidity}]
Let $u_*$ be the downward crossing from Lemma \ref{lem:multilens}.  For each phase curve \(H_i\), define $P_i,Q_i$ by \eqref{Pformula} and \eqref{Qformula}.  
Subtracting the two identities \eqref{Pformula} at \(u_*\) gives
\begin{equation}\label{Porder}
 % P_2(u_*)-P_1(u_*)=2k_1A(u_*)>0.
\begin{aligned}
P_2(u_*)-P_1(u_*)
&=
2k_1\int_0^{u_*}
s\bigl(H_2(s)-H_1(s)\bigr)\,\dd s\\
&=
2k_1A(u_*)>0.
\end{aligned}
\end{equation}

Since \(H_1(u_*)=H_2(u_*)\), the local terms in \eqref{Qformula} are canceled, and hence
\begin{equation}\label{Qorder}
 % Q_2(u_*)-Q_1(u_*)=-k_2B(u_*)<0.
\begin{aligned}
Q_2(u_*)-Q_1(u_*)
&=
-k_2\int_0^{u_*}
\bigl(H_1(s)+H_2(s)\bigr)
\bigl(H_2(s)-H_1(s)\bigr)\,\dd s\\
&=
-k_2B(u_*)<0.
\end{aligned}
\end{equation}
Using $Q_i=\rho_iP_i(H_i')^2$, we obtain
\begin{equation}\label{sloperatio}
 \frac{H_2'(u_*)^2}{H_1'(u_*)^2}
 =\frac{Q_2(u_*)}{Q_1(u_*)}
  \frac{\rho_1}{\rho_2}
  \frac{P_1(u_*)}{P_2(u_*)}<1.
\end{equation}
Since
$H_1'(u_*)<0,
H_2'(u_*)<0,$
% $H_i'$ are negative; hence 
\eqref{sloperatio} implies
\[
 H_2'(u_*)>H_1'(u_*),
\]
and hence % or
\begin{equation}\label{Dprimepos}
 D'(u_*)>0.
\end{equation}
This is impossible because \(u_*\) is a downward sign-changing zero of \(D\), and therefore \(D'(u_*)\leq0\), with equality possible only for a higher odd-order crossing. This contradiction proves the theorem.
\end{proof}

\section{Characterization of the zero-speed threshold}\label{sec:global}

We return to the full traveling wave problem \eqref{TW} and collect the standard properties needed below.

\begin{proposition}\label{prop:standard}
In the strong-competition regime, the monotone bistable front is unique up to translation, and the corresponding propagation speed
\[
c=c(d,r,k_1,k_2)
\]
depends continuously on 
\[
(d,r,k_1,k_2)\in(0,\infty)^2\times(1,\infty)^2.
\]
More precisely, fix the translation by imposing
\[
U(0)=\frac{1}{2}.
\]
Suppose that
\[
(d_n,r_n,k_{1,n},k_{2,n})
\longrightarrow
(d,r,k_1,k_2)
\quad
\text{in}
\quad
(0,\infty)^2\times(1,\infty)^2.
\]
If
\[
(c_n,U_n,V_n)
\]
denotes the corresponding normalized monotone front, then
\[
c_n\longrightarrow c
\]
and
\[
(U_n,V_n)
\longrightarrow
(U,V)
\qquad\text{in }
C^2_{\mathrm{loc}}(\mathbb R)
\times
C^2_{\mathrm{loc}}(\mathbb R).
\]
% convergence of the coefficients implies local \(C^2\)-convergence of
% the corresponding wave profiles and convergence of their speeds. 

For fixed $d,r,k_2$, the speed is strictly decreasing in $k_1$.
Moreover, there exists a unique threshold $k_*(d,r,k_2)>1$ 
% satisfying \eqref{thresholdknown}.
such that
\[
c(d,r,k_1,k_2)
\begin{cases}
>0, & 1<k_1<k_*,\\
=0, & k_1=k_*,\\
<0, & k_1>k_*.
\end{cases}
\]
\end{proposition}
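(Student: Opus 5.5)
This proposition collects known facts, so the plan is to assemble them from the classical theory rather than prove anything new. The proof has three ingredients: existence and uniqueness, continuous dependence, and strict monotonicity in \(k_1\) leading to the threshold.

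\textbf{Existence and uniqueness.} Under \eqref{SC} the change of variables \(\tilde V=1-V\) turns \eqref{PDE} into a cooperative (monotone) system. The end states \((0,0)\) and \((1,1)\) of the transformed system are both linearly stable for the kinetics. Moreover, every intermediate equilibrium in the order interval, namely the coexistence state, is unstable. Hence the bistable monotone-system theory of Volpert--Volpert--Volpert \cite{Volpert1994} and Kan-on \cite{Kanon1995} applies. It gives a monotone front that is unique up to translation, together with uniqueness of its speed \(c\).

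\textbf{Continuous dependence.} I will argue by compactness plus uniqueness. Take parameters converging to a limit and the normalized fronts with \(U_n(0)=1/2\).
\begin{enumerate}
\item First I show \(c_n\) is bounded. Comparison with the scalar KPP equations bounds \(|c_n|\) by something like \(\max(2,2\sqrt{d_nr_n})\), uniformly on compact parameter sets.
\item Standard elliptic estimates applied to \eqref{TW} then give uniform \(C^{2,\alpha}\) bounds on \((U_n,V_n)\).
\item Extract a subsequence with \(c_n\to\bar c\) and \((U_n,V_n)\to(\bar U,\bar V)\) in \(C^2_{\rm loc}\). The limit is a monotone solution of \eqref{TW} with speed \(\bar c\), nondecreasing in the appropriate sense, and it satisfies \(\bar U(0)=1/2\).
\item I identify the limits at \(\pm\infty\). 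They must be equilibria of the kinetics ordered by monotonicity. Because of the normalization, I must exclude limits at the coexistence state or at \((0,0)\).
\item This is done with the usual argument: if the limit connected a different pair of equilibria, the front would split into two sub-fronts whose speeds are ordered the wrong way. That ordering contradicts the instability of the intermediate state, as in \cite[Chapter~3]{Volpert1994}.
\item Once \((\bar U,\bar V)\) is a genuine bistable front, uniqueness forces \(\bar c=c\) and \((\bar U,\bar V)=(U,V)\). Since every subsequence has this same limit, the whole sequence converges.
\end{enumerate}
This step, excluding loss of the profile (splitting into two fronts or convergence to a trivial state), is the main obstacle. I would invoke the a priori estimates of \cite[Chapter~3, Proposition~1.2 and Theorem~2.1]{Volpert1994} rather than redo them.

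\textbf{Strict monotonicity and the threshold.} Strict decrease of \(c\) in \(k_1\) follows from Kan-on \cite{Kanon1995}. Increasing \(k_1\) makes the \(U\)-nonlinearity smaller when \(V>0\), so the old front is a strict supersolution-type object for the new system. A sliding or comparison argument then gives a strict speed inequality.

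The threshold follows by an intermediate value argument, using the following three facts.
\begin{itemize}
\item As \(k_1\downarrow1\), the speed is positive. In the limit the state \((0,1)\) becomes unstable for \(u\), so \(u\) invades. Alternatively, this follows from the known sign results \cite{Kanon1996,GuoLin2013}.
\item For \(k_1\) large, the speed is negative. One can use the sufficient conditions in \cite{GuoLin2013}, or a comparison with the \(v\)-invasion.
\item Combined with continuity and strict monotonicity in \(k_1\), these give a unique zero \(k_*(d,r,k_2)>1\) together with the stated sign pattern, i.e.\ \eqref{thresholdknown}.
\end{itemize}
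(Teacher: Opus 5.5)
Your proposal takes the same route as the paper, which also treats this proposition as a compilation of classical results. The paper cites Gardner, Kan-on and Nakamura--Ogiwara for uniqueness, strict $k_1$-monotonicity and the threshold, and Kan-on and Volpert for continuity via the standard compactness-plus-uniqueness argument; your sketches just fill in details the paper leaves to these references. One caution: your heuristic for $c>0$ as $k_1\downarrow 1$ is not a proof by itself, because $k_1=1$ lies outside the region where continuity is available and $(0,1)$ is non-hyperbolic there, so that endpoint sign (like the one for large $k_1$) has to come from the cited literature, which is what the paper relies on.
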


\noindent
% The strict $k_1$-monotonicity and threshold statement are recorded explicitly in \cite{Kanon1995,NakamuraOgiwara2026}.  Continuity follows from the standard heteroclinic continuation/compactness argument for the unique monotone front; see \cite{Kanon1995,Volpert1994}.  Notice that this proposition uses no monotonicity with respect to $d$ or $r$.
The uniqueness, strict \(k_1\)-monotonicity, and threshold property
are classical(see \mbox{%DIFAUXCMD
\cite{Gardner1982,Kanon1995,NakamuraOgiwara2026}}\hskip0pt%DIFAUXCMD
).
The continuous dependence used above follows from the standard continuation and compactness theory for monotone heteroclinic fronts (see also \cite{Kanon1995,Volpert1994}).

Since the standing wave system depends on \((d,r)\) only through \(\rho=d/r\), Theorem \ref{thm:rigidity} has the following immediate global consequence.

\begin{lemma}\label{lem:injective}
Fix $k_2>1$ and define
\begin{equation}\label{Kdef}
 K_{k_2}(\rho):=k_*(\rho,1,k_2),\qquad \rho>0.
\end{equation}
Then $K_{k_2}$ is continuous and injective.
\end{lemma}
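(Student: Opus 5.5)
Injectivity comes from Theorem~\ref{thm:rigidity}, and continuity from Proposition~\ref{prop:standard} combined with the strict monotonicity in $k_1$.

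\emph{Injectivity.} Suppose $K_{k_2}(\rho_1)=K_{k_2}(\rho_2)=:k_1$ with $\rho_1\ne\rho_2$. By Proposition~\ref{prop:standard}, $c(\rho_i,1,k_1,k_2)=0$ for $i=1,2$. So for each $i$ the monotone front with $d=\rho_i$, $r=1$ is a monotone standing front of \eqref{SW} with ratio $\rho_i$. Both fronts share the same $(k_1,k_2)$, which contradicts Theorem~\ref{thm:rigidity}.

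\emph{Continuity.} Fix $\rho_0>0$ and set $k_0:=K_{k_2}(\rho_0)>1$. Given $\varepsilon>0$ small with $k_0-\varepsilon>1$, strict decrease in $k_1$ gives
\[
c(\rho_0,1,k_0-\varepsilon,k_2)>0>c(\rho_0,1,k_0+\varepsilon,k_2).
\]
Proposition~\ref{prop:standard} says $c$ is continuous in all parameters, so these two strict inequalities persist for $\rho$ in a neighborhood of $\rho_0$. For such $\rho$, the threshold characterization forces $k_0-\varepsilon<k_*(\rho,1,k_2)<k_0+\varepsilon$. Hence $|K_{k_2}(\rho)-K_{k_2}(\rho_0)|<\varepsilon$, which proves continuity.

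There is no real obstacle here, since both ingredients are already available. The one point to check is that the threshold from Proposition~\ref{prop:standard} is defined for every $\rho$, and that a zero-speed front for \eqref{TW} with $d=\rho$, $r=1$ really solves \eqref{SW}. The latter holds because dividing by $r=1$ is trivial.
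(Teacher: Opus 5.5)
Your proposal is correct and follows essentially the same route as the paper: injectivity comes from applying Theorem~\ref{thm:rigidity} to the two zero-speed fronts sharing the same $(k_1,k_2)$. Continuity comes from bracketing the threshold between $k_0\pm\varepsilon$, using strict monotonicity in $k_1$ and continuity of $c$ in $\rho$; you phrase this with neighborhoods rather than the paper's sequences $\rho_n\to\rho$, which is equivalent.
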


% \begin{proof}
% Continuity follows from Proposition \ref{prop:standard} and strict monotonicity of $c$ in $k_1$ (equivalently, from the elementary continuous-root lemma for a continuous function strictly monotone in its root variable).  If
% \[
%  K_{k_2}(\rho_1)=K_{k_2}(\rho_2)=k_1,
% \]
% then both $\rho_1$ and $\rho_2$ produce standing fronts for the same $k_1,k_2$.  Theorem \ref{thm:rigidity} implies $\rho_1=\rho_2$.
% \end{proof}

\begin{proof}
We first prove continuity. Let
\[
\rho_n\longrightarrow\rho>0
\]
and set
\[
\kappa:=K_{k_2}(\rho).
\]
Fix \(\varepsilon>0\) sufficiently small so that
\[
\kappa-\varepsilon>1.
\]
Since the propagation speed is strictly decreasing with respect to
\(k_1\), the defining property of \(\kappa\) yields
\[
c(\rho,1,\kappa-\varepsilon,k_2)>0
\]
and
\[
c(\rho,1,\kappa+\varepsilon,k_2)<0.
\]
By Proposition~\ref{prop:standard}, for all sufficiently large
\(n\),
\[
c(\rho_n,1,\kappa-\varepsilon,k_2)>0
\]
and
\[
c(\rho_n,1,\kappa+\varepsilon,k_2)<0.
\]
The defining property of \(K_{k_2}(\rho_n)\) therefore implies
\[
\kappa-\varepsilon
<
K_{k_2}(\rho_n)
<
\kappa+\varepsilon.
\]
Since \(\varepsilon>0\) is arbitrary, we conclude that
\[
K_{k_2}(\rho_n)
\longrightarrow
K_{k_2}(\rho).
\]
Thus \(K_{k_2}\) is continuous.

To prove injectivity, suppose that
\[
K_{k_2}(\rho_1)
=
K_{k_2}(\rho_2)
=
k_1.
\]
Then both \(\rho_1\) and \(\rho_2\) support monotone standing fronts
for the same competition coefficients \(k_1,k_2\).
Theorem~\ref{thm:rigidity} gives
\[
\rho_1=\rho_2.
\]
Hence \(K_{k_2}\) is injective.
\end{proof}

A continuous injective real-valued function on an interval is strictly monotone. The Maxwell identity determines the direction of monotonicity.
\begin{lemma}\label{lem:decrease}
For every $k_2>1$, $K_{k_2}$ is strictly decreasing and
\begin{equation}\label{K1}
 K_{k_2}(1)=k_2.
\end{equation}
Moreover,
\begin{equation}\label{sideb}
 K_{k_2}(\rho)>k_2\quad(0<\rho<1),
 \qquad
 K_{k_2}(\rho)<k_2\quad(\rho>1).
\end{equation}
\end{lemma}

% \begin{proof}
% Apply Theorem \ref{thm:Maxwell} to the standing front with $k_1=K_{k_2}(\rho)$.  Since $\E>0$,
% \[
%  (\rho-1)(K_{k_2}(\rho)-k_2)<0
% \]
% when $\rho\ne1$.  At $\rho=1$, \eqref{Maxwell} forces $K_{k_2}(1)=k_2$.  Therefore \eqref{sideb} holds.  Since $K_{k_2}$ is continuous and injective by Lemma \ref{lem:injective}, it is strictly monotone; \eqref{sideb} excludes strict increase.
% \end{proof}

\begin{proof}
Apply Theorem~\ref{thm:Maxwell} to the standing front with
\[
k_1=K_{k_2}(\rho).
\]
If \(\rho\neq1\), then \(\E>0\), and the positivity of
\[
\rho\bigl(K_{k_2}(\rho)-1\bigr)+(k_2-1)
\]
in \eqref{Maxwell} imply
\[
(\rho-1)\bigl(K_{k_2}(\rho)-k_2\bigr)<0.
\]
Therefore,
\[
K_{k_2}(\rho)>k_2
\quad\text{for }0<\rho<1,
\]
and
\[
K_{k_2}(\rho)<k_2
\quad\text{for }\rho>1.
\]

At \(\rho=1\), identity \eqref{Maxwell} reduces to
\[
\bigl(K_{k_2}(1)-k_2\bigr)
\bigl(K_{k_2}(1)+k_2-2\bigr)=0.
\]
Since \(K_{k_2}(1)>1\) and \(k_2>1\), the second factor is positive.
Hence
\[
K_{k_2}(1)=k_2.
\]

By Lemma~\ref{lem:injective}, \(K_{k_2}\) is continuous and
injective, and is therefore strictly monotone. The inequalities above
exclude strict increase, so \(K_{k_2}\) is strictly decreasing.
\end{proof}

\subsection{The two endpoint limits}

% The small-$\rho$ endpoint is supplied by the near-degenerate theory.  Alzahrani--Davidson--Dodds prove that for fixed $k_1,k_2>1$ and sufficiently small diffusion ratio, the sign is determined by the degenerate balance $k_1=k_2^2$; in the notation used here,
% \begin{equation}\label{smalldsign}
%  \rho\ll1:
%  \qquad
%  k_1>k_2^2\Rightarrow c<0,
%  \qquad
%  k_1<k_2^2\Rightarrow c>0.
% \end{equation}
% See \cite{AlzahraniDavidsonDodds2010} and the discussion in Section 5 of \cite{NakamuraOgiwara2026}.

The limit as \(\rho\downarrow0\) follows from the near-degenerate theory of Alzahrani--Davidson--Dodds \cite{AlzahraniDavidsonDodds2010}. More precisely, for fixed \(k_1,k_2>1\) with \(k_1\neq k_2^2\), there exists
\[
\rho_0=\rho_0(k_1,k_2)>0
\]
such that, for every \(0<\rho<\rho_0\),
\begin{equation}\label{smalldsign}
\begin{cases}
c(\rho,1,k_1,k_2)<0,
&
k_1>k_2^2,
\\[1mm]
c(\rho,1,k_1,k_2)>0,
&
1<k_1<k_2^2.
\end{cases}
\end{equation}
See also the discussion in Section~5 of
\cite{NakamuraOgiwara2026}.

\begin{lemma}[Small-ratio limit]\label{lem:smallend}
For every $k_2>1$,
\begin{equation}\label{smallend}
 \lim_{\rho\downarrow0}K_{k_2}(\rho)=k_2^2.
\end{equation}
\end{lemma}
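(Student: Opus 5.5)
Since $K_{k_2}$ is strictly decreasing on $(0,\infty)$ by Lemma~\ref{lem:decrease}, the limit
\[
L:=\lim_{\rho\downarrow0}K_{k_2}(\rho)=\sup_{\rho>0}K_{k_2}(\rho)
\]
exists in $(k_2,\infty]$. I will prove $L=k_2^2$ by showing the two inequalities $L\le k_2^2$ and $L\ge k_2^2$ separately. Both use only the threshold characterization in Proposition~\ref{prop:standard} (with $d=\rho$, $r=1$) and the small-ratio sign information \eqref{smalldsign}.

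\emph{Upper bound.} Suppose $L>k_2^2$. Then I can choose $k_1$ with $k_2^2<k_1<L$. Since $K_{k_2}$ increases to $L$ as $\rho\downarrow0$, there is $\rho_1>0$ such that $K_{k_2}(\rho)>k_1$ for all $0<\rho<\rho_1$. By the threshold property, this gives $c(\rho,1,k_1,k_2)>0$ for all such $\rho$. On the other hand, \eqref{smalldsign} with $k_1>k_2^2$ gives $c(\rho,1,k_1,k_2)<0$ for all $0<\rho<\rho_0(k_1,k_2)$. Taking $\rho<\min(\rho_0,\rho_1)$ yields a contradiction, so $L\le k_2^2$.

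\emph{Lower bound.} Suppose $L<k_2^2$. Fix any $k_1\in(L,k_2^2)$; this $k_1$ exceeds $1$ because $L>k_2>1$. Monotonicity gives $K_{k_2}(\rho)\le L<k_1$ for every $\rho>0$, so the threshold property yields $c(\rho,1,k_1,k_2)<0$ for all $\rho$. But \eqref{smalldsign} with $1<k_1<k_2^2$ gives $c>0$ for small $\rho$, again a contradiction. Hence $L=k_2^2$.

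The main obstacle is really just the input \eqref{smalldsign}, which is imported from Alzahrani--Davidson--Dodds. Once that is granted, the argument is a squeeze using monotonicity. The only points to check are that the $k_1$ chosen in each case is admissible ($k_1>1$, $k_1\ne k_2^2$) and that $\rho_0$ depends only on $(k_1,k_2)$. Both hold by construction.
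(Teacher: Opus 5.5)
Your proof is correct and is essentially the paper's argument: you squeeze $L$ to $k_2^2$ using the monotonicity of $K_{k_2}$, the threshold characterization, and the small-ratio sign rule \eqref{smalldsign}. The paper argues directly, showing that each $k_1>k_2^2$ gives $L\le k_1$ and each $k_1\in(1,k_2^2)$ gives $L\ge k_1$, then letting $k_1\to k_2^2$; your version by contradiction differs only cosmetically.
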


\begin{proof}
Since $K_{k_2}$ is decreasing, the limit $L_0:=\lim_{\rho\downarrow0}K_{k_2}(\rho)$ exists in $(k_2,\infty]$.  
If $k_1>k_2^2$, \eqref{smalldsign} and \eqref{signK} imply $k_1>K_{k_2}(\rho)$ for all sufficiently small $\rho$, hence $L_0\le k_1$.  
Letting $k_1\downarrow k_2^2$ gives $L_0\le k_2^2$.  If $1<k_1<k_2^2$, the positive-speed half of \eqref{smalldsign} gives $k_1<K_{k_2}(\rho)$ for small $\rho$, hence $L_0\ge k_1$.  Letting $k_1\uparrow k_2^2$ yields $L_0\ge k_2^2$.
\end{proof}

The opposite endpoint is obtained from species exchange. The propagation speed satisfies \cite{NakamuraOgiwara2026}
\begin{equation}\label{exchange}
 c(d,r,k_1,k_2)=-\sqrt{dr}\;c(1/d,1/r,k_2,k_1).
\end{equation}

\begin{lemma}[Large-ratio limit]\label{lem:largeend}
For every $k_2>1$,
\begin{equation}\label{largeend}
 \lim_{\rho\to\infty}K_{k_2}(\rho)=\sqrt k_2.
\end{equation}
\end{lemma}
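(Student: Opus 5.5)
We will use the species-exchange identity \eqref{exchange} to turn the large-ratio limit into the small-ratio limit already proved in Lemma \ref{lem:smallend}. Because the sign of $c$ only matters at zero speed, the factor $\sqrt{dr}>0$ in \eqref{exchange} is harmless, and \eqref{exchange} yields
\[
c(d,r,k_1,k_2)=0\iff c(1/d,1/r,k_2,k_1)=0 .
\]
In the exchanged problem the roles of the competition coefficients are swapped: $k_2$ becomes the ``$k_1$-slot'' and $k_1$ becomes the ``$k_2$-slot''. The diffusion-to-growth ratio becomes $(1/d)/(1/r)=1/\rho$.

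By \eqref{Kzero} and the fact that zero speed depends on $(d,r)$ only through $\rho$, this gives, for all $k_1,k_2>1$ and $\rho>0$,
\[
k_1=K_{k_2}(\rho)\iff k_2=K_{k_1}(1/\rho).
\]
Now fix $k_2>1$ and set $L_\infty:=\lim_{\rho\to\infty}K_{k_2}(\rho)$. This limit exists because $K_{k_2}$ is strictly decreasing (Lemma \ref{lem:decrease}), and it lies in $[1,k_2)$. Write $\kappa(\rho):=K_{k_2}(\rho)$, so that $k_2=K_{\kappa(\rho)}(1/\rho)$ for every $\rho$.

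First suppose $L_\infty>1$. Along $\rho\to\infty$ we then have $1/\rho\downarrow0$ and $\kappa(\rho)\downarrow L_\infty>1$. We would like to conclude $k_2=\lim K_{\kappa(\rho)}(1/\rho)=L_\infty^2$, that is, $L_\infty=\sqrt{k_2}$. This needs the small-ratio limit to hold with the competition parameter varying. I would avoid any uniformity issue by a monotone sandwich based on the sign classification \eqref{smalldsign}.

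Take any $\kappa'$ with $1<\kappa'<L_\infty$ and suppose $k_2>\kappa'^2$. By \eqref{smalldsign}, $c(\rho',1,k_2,\kappa')<0$ for small $\rho'$, so $K_{\kappa'}(\rho')<k_2$. Since the speed increases in the $k_2$-slot, $K_{k}(\rho')$ is increasing in $k$; this follows from strict monotonicity of $c$ in its fourth argument, which is recorded in the introduction. Hence $K_{\kappa(\rho)}(1/\rho)\ge K_{\kappa'}(1/\rho)$. Combining these two facts does not immediately contradict $k_2=K_{\kappa(\rho)}(1/\rho)$, so I would instead run the argument in the cleaner direction.

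If $L_\infty^2<k_2$, pick $\kappa'$ with $L_\infty<\kappa'<\sqrt{k_2}$. For large $\rho$ we have $\kappa(\rho)<\kappa'$, so
\[
k_2=K_{\kappa(\rho)}(1/\rho)\le K_{\kappa'}(1/\rho)\to\kappa'^2<k_2
\]
by Lemma \ref{lem:smallend}, which is a contradiction. If $L_\infty^2>k_2$, pick $\kappa'$ with $\sqrt{k_2}<\kappa'<L_\infty$. Then $\kappa(\rho)>\kappa'$, and
\[
k_2\ge K_{\kappa'}(1/\rho)\to\kappa'^2>k_2,
\]
again a contradiction. Hence $L_\infty=\sqrt{k_2}$. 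The same first inequality also excludes $L_\infty=1$: taking $\kappa'$ with $1<\kappa'<\sqrt{k_2}$ gives the same contradiction, so $L_\infty>1$ is not needed as an assumption.

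The main obstacle is justifying that $K_k(\rho)$ is increasing in $k$. This is equivalent to the strict monotonicity of $c$ in $k_2$ cited from \cite{Kanon1995,NakamuraOgiwara2026} combined with the threshold definition. Concretely, if $k<k'$ and $k_1=K_k(\rho)$, then $c(\rho,1,k_1,k')>c(\rho,1,k_1,k)=0$, so $K_{k'}(\rho)>k_1$ by \eqref{signK}. Once this is in hand, the sandwich argument above is routine.
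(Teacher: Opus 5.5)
Your proof is correct. It runs the comparison on the other side of the exchange identity than the paper does.

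\textbf{The paper's route.} The paper never varies the second competition slot. Assuming, say, $L_\infty>\sqrt{k_2}$, it fixes a single $k_1\in(\sqrt{k_2},L_\infty)$ and reads off $c(\rho,1,k_1,k_2)>0$ from the $k_1$-threshold in Proposition \ref{prop:standard}. Via \eqref{exchange} this becomes $c(1/\rho,1,k_2,k_1)<0$. That contradicts the fixed-pair sign rule \eqref{smalldsign} for the pair $(k_2,k_1)$, since $k_2<k_1^2$. The case $L_\infty<\sqrt{k_2}$ is symmetric.

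\textbf{Your route.} You encode the zero set as the duality $k_1=K_{k_2}(\rho)\iff k_2=K_{k_1}(1/\rho)$. This follows directly from \eqref{exchange} at $(d,r)=(\rho,1)$ and the definition \eqref{Kdef}; you do not need \eqref{Kzero} for it. You then write $k_2=K_{\kappa(\rho)}(1/\rho)$ with a moving subscript, and sandwich it against $K_{\kappa'}(1/\rho)$ for a fixed $\kappa'$ using Lemma \ref{lem:smallend}.

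\textbf{The extra ingredient.} This route needs one more fact: $K_k(\rho)$ is increasing in $k$, i.e.\ $c$ is strictly monotone in its fourth argument. You flag this as the main obstacle, but it is immediate. By \eqref{exchange}, $c(d,r,k_1,\cdot)$ equals $-\sqrt{dr}$ times $c(1/d,1/r,\cdot,k_1)$. The latter is strictly decreasing by Proposition \ref{prop:standard}, so no separate citation is needed.

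\textbf{Comparison.} The paper's route is slightly more economical: it uses only $k_1$-monotonicity and the fixed-pair rule \eqref{smalldsign}, with no limit lemma. Yours makes the inversion structure explicit. It shows directly why the relation $k_1\to k_2^2$ at $\rho\downarrow0$ turns into $k_1\to\sqrt{k_2}$ at $\rho\to\infty$.

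\textbf{Cleanup.} Delete the abandoned paragraph beginning ``Take any $\kappa'$ with $1<\kappa'<L_\infty$\ldots''. It treats the case $k_2>\kappa'^2$, where no contradiction is expected. Your final two-case argument already covers everything, including $L_\infty=1$.
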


\begin{proof}
Since \(K_{k_2}\) is strictly decreasing and bounded below by \(1\),
the limit
$
L_\infty
:=
\lim_{\rho\to\infty}K_{k_2}(\rho)
$
exists and satisfies
$
1\leq L_\infty<k_2.
$

Suppose first that
$
L_\infty>\sqrt{k_2}.
$
Choose \(k_1\) such that
\[
\sqrt{k_2}<k_1<L_\infty.
\]
For all sufficiently large \(\rho\),
$
k_1<K_{k_2}(\rho),
$
and hence
\[
c(\rho,1,k_1,k_2)>0.
\]
Using the species-exchange identity \eqref{exchange}, we
obtain
\[
c(1/\rho,1,k_2,k_1)<0.
\]
However,
$
k_2<k_1^2.
$
Applying the small-ratio sign rule \eqref{smalldsign} to the
competition pair \((k_2,k_1)\) gives
\[
c(1/\rho,1,k_2,k_1)>0
\]
for all sufficiently large \(\rho\), a contradiction.

Suppose next that
$
L_\infty<\sqrt{k_2}.
$
Choose \(k_1\) such that
$
L_\infty<k_1<\sqrt{k_2}.
$
For all sufficiently large \(\rho\),
$
k_1>K_{k_2}(\rho),
$
and therefore
\[
c(\rho,1,k_1,k_2)<0.
\]
The species-exchange identity then yields
\[
c(1/\rho,1,k_2,k_1)>0.
\]
On the other hand,
$
k_2>k_1^2,
$
so the small-ratio sign rule applied to the pair \((k_2,k_1)\) gives
\[
c(1/\rho,1,k_2,k_1)<0
\]
for all sufficiently large \(\rho\), again a contradiction.
Consequently,
$
L_\infty=\sqrt{k_2}.
$
\end{proof}

% \begin{proof}
% By monotonicity, $L_\infty:=\lim_{\rho\to\infty}K_{k_2}(\rho)$ exists and $1\le L_\infty<k_2$.  Suppose first that $L_\infty>\sqrt k_2$ and choose
% \[
%  \sqrt k_2<k_1<L_\infty.
% \]
% For all sufficiently large $\rho$, $k_1<K_{k_2}(\rho)$, hence
% \[
%  c(\rho,1,k_1,k_2)>0.
% \]
% Using \eqref{exchange},
% \[
%  c(1/\rho,1,k_2,k_1)<0.
% \]
% But $k_2<k_1^2$, so the small-ratio rule \eqref{smalldsign}, applied with the competition pair $(k_2,k_1)$, gives $c(1/\rho,1,k_2,k_1)>0$ when $\rho$ is large, a contradiction.

% If $L_\infty<\sqrt k_2$, choose $L_\infty<a<\sqrt k_2$.  Then $k_1>K_{k_2}(\rho)$ for large $\rho$, so $c(\rho,1,k_1,k_2)<0$ and, by \eqref{exchange}, $c(1/\rho,1,k_2,k_1)>0$.  Since now $k_2>a^2$, the small-ratio rule gives the opposite sign for large $\rho$, again a contradiction.  Thus $L_\infty=\sqrt k_2$.
% \end{proof}

\begin{proof}[Proof of Theorem \ref{thm:main}]
Existence and uniqueness of the threshold \(K_{k_2}(\rho)\) for each \(\rho>0\) follow from Proposition \ref{prop:standard}. Continuity and injectivity are given by Lemma \ref{lem:injective}, while Lemma \ref{lem:decrease} yields strict decrease. The central value and the two endpoint limits are \eqref{K1}, \eqref{smallend}, and \eqref{largeend}. Finally, the classification \eqref{signK} is exactly the strict \(k_1\)-monotonicity in \eqref{thresholdknown}, evaluated relative to the zero-speed threshold. Hence the range of \(K_{k_2}\) is \((\sqrt{k_2},k_2^2)\).
\end{proof}

\begin{proof}[Proof of Corollaries \ref{cor:sharp} and \ref{cor:symmetric}]
Corollary \ref{cor:sharp} follows immediately from the strict decrease and range of $K_{k_2}$ together with \eqref{signK}.  
If $k_1=k_2=k$, then $K_k(1)=k$.
The injectivity of $K_k$ implies 
\[
k=K_k(\rho)
\iff
\rho=1.
\]
Since $K_k$ is strictly decreasing,
\[
\rho<1
\implies
k<K_k(\rho)
\implies
c(d,r,k,k)>0,
\]
whereas
\[
\rho>1
\implies
k>K_k(\rho)
\implies
c(d,r,k,k)<0.
\]
Since \(\rho=d/r\), this proves  \eqref{symzero}--\eqref{symsign}.
\end{proof}

\begin{remark}
The main result of this paper concerns the structure of the zero-speed set rather than 
pointwise monotonicity of the full propagation speed. More precisely, we prove that, for 
each fixed $k_2>1$, the zero-speed set is described by a strictly decreasing threshold
\[
    k_1=K_{k_2}\left(\frac{d}{r}\right).
\]
We do not claim that $c(d,r,k_1,k_2)$ is monotone in $d$ or in $r$ separately. Moreover, 
the strict monotonicity of $K_{k_2}$ does not by itself imply the pointwise inequality
\[
    K_{k_2}'(\rho)<0
    \qquad\text{for every }\rho>0.
\]
Such a differential statement would require additional information on the dependence of 
the standing profile on $\rho$.
\end{remark}

% \section*{Acknowledgments}

% S. Ma is partially supported by the National Natural Science Foundation of China (No. 12501686) and the China Postdoctoral Science Foundation (2024M751949); 
% %\textcolor{blue}{D. Xiao is partially supported by the National Natural Science Foundation of China (No. XXXX);}
% M. Zhou is partially supported by
% the National Key
% Research and Development Program of China (2021YFA1002400), Scientific Research
% Innovation Capability Support Project for Young Faculty (SRICSPYF-ZY2025172), National Natural Science Foundation of China (No. 12271437, 11971498), the Fundamental Research Funds for the Central Universities (Nankai University 63241642), Tianjin Natural Science Foundation Outstanding
% Youth Project (23JCJQJC00190) and Nankai Zhide Foundation.

\end{document}